\newtheorem{thm}{Theorem}
\newtheorem{proposition}[thm]{Proposition}
\newtheorem{cor}[thm]{Corollary}
\newtheorem{lemma}[thm]{Lemma}
\newtheorem{remark}[thm]{Remark}
\newtheorem{example}[thm]{Example}
\newtheorem{question}[thm]{Question}
\newtheorem{problem}[thm]{Problem}
\newtheorem{con}[thm]{Conjecture}
\def\L{{\cal L}}
\def\LL{{\sf L}}
\def\1{{\sf 1}}
\def\ccap{{\sf cap}}
\begin{document}

\title{The $M$-matrix group inverse problem\\
for distance-biregular graphs}

\author{Aida Abiad\thanks{\texttt{a.abiad.monge@tue.nl},  Department of Mathematics and Computer Science, Eindhoven University of Technology, The Netherlands\\Department of Mathematics: Analysis, Logic and Discrete Mathematics, Ghent University, Belgium\\Department of Mathematics and Data Science of Vrije Universiteit Brussel, Belgium}  \quad \'Angeles Carmona\thanks{\texttt{angeles.carmona@upc.edu},  Department of Mathematics, Polytechnic University of Catalonia, Spain}
\quad Andr\'es M. Encinas\thanks{\texttt{andres.marcos.encinas@upc.edu}, Department of Mathematics, Polytechnic University of Catalonia, Spain} \quad Mar\'\i a Jos\'e Jim\'enez\thanks{\texttt{maria.jose.jimenez@upc.edu}, Department of Mathematics, Polytechnic University of Catalonia, Spain}}

\date{}

\maketitle

\begin{abstract}
In this paper we provide the group inverse of the combinatorial Laplacian matrix of distance--biregular graphs using the so--called equilibrium measures for sets obtained by deleting a vertex. We also show that the two equilibrium arrays characterizing a distance--biregular graph can be expressed in terms of the mentioned equilibrium measures. As a consequence of the minimum principle, we show a characterization of when the group inverse of the combinatorial Laplacian matrix of a distance--biregular graph is an $M$--matrix. 
\end{abstract}


{\bf AMS Classification:} 15A09, 05C50, 51E05, 51E12

\section{Introduction}

One problem with the theory of distance-regular graphs is that it does not apply directly to the graphs of generalised polygons. Godsil and Shawe--Taylor \cite[]{gs1987}  overcame this difficulty by introducing the class of distance-regularised graphs, a natural common generalisation. These graphs are shown to either be distance-regular or distance-biregular. This family includes the generalised polygons and other interesting graphs. Distance-biregular graphs, which were introduced by Delorme \cite{d1983} in 1983, can be viewed as a bipartite variant of distance-regular graph: the graphs are bipartite and for each vertex there exists an intersection array depending on the stable component of the vertex. Thus such graphs are to distance-regular graphs as bipartite regular graphs are to regular graphs. They also are to non-symmetric association schemes as distance-regular graphs are to symmetric association schemes. Since their introduction, distance-biregular graphs have received quite some attention, see \cite{A1990,  c1999part1,c1999part2,  d1994,  F2013,HP2021, MS1985} or \cite[Chapter 4]{BCN89} for an overview.

In the first part of this paper we obtain the  group inverse of the combinatorial Laplacian matrix of distance-biregular graphs. The group inverse matrix can be seen in the framework of discrete potential theory as the Green's functions associated with the Laplacian operator and it can be used to deal with diffusion-type problems on graphs, such as chip-firing, load balancing, and discrete Markov chains. For some graph classes, the group inverse is known. Instances of it are the work of Urakawa \cite{U1997}, Bendito et al. \cite{BCE00,BCEM10-1} or more recently the study of the Green function for forests by Chung and Zeng \cite{cz2021}. Other generalized inverses, such as the Moore-Penrose inverse, have been studied. For instance, the Moore-Penrose inverse of the  incidence matrix of several graphs has been investigated by Azami and Bapat \cite{AB2019v1,AB2019v2,AB2018}. Nevertheless, the problem of computing group inverses still remains wide open for most graph classes. We show an explicit expression for  the group inverse of the combinatorial Laplacian matrix of a distance-biregular graph in terms of its intersection numbers. This result, together with the group inverse  of a distance-regular graph found by Bendito, Carmona and Encinas \cite{BCEM10-1}, and independently, by Chung and Yau \cite{fy2000}, completes the corresponding study for distance-regularised graphs.

In matrix theory, the Laplacian matrix is known to be a symmetric $M$-matrix (a symmetric positive semi-definite matrix with non-positive off-diagonal elements). Nonnegative matrices and $M$-matrices have become a staple in contemporary linear algebra, and they arise frequently in its applications. Such matrices are encountered not only in matrix analysis, but also in stochastic processes, graph theory, electrical networks, and demographic models \cite{KN95}.  A fundamental problem related with $M$-matrices is the so-called inverse $M$-matrix problem, that consists in characterizing all non-negative matrices whose inverses are $M$-matrices. For singular matrices, the inverse problem was originally posted by Neumann, Poole and Werner as follows:

\begin{question}\cite{DN1984,NPW1982}\cite[Question 3.3.8]{kirklandbook}\label{que:Mmatrix}
Characterize all singular and irreducible $M$-matrices for which its group inverse is also an $M$-matrix.
\end{question}

This question has only been answered for specific matrix classes. In the graph setting, this question has been solved for weighted trees by Kirkland and Neumann \cite{KN98}, and for distance-regular graphs by Bendito, Carmona and Encinas \cite{BCEM2012}. In a more general setting, Question \ref{que:Mmatrix} has been investigated for nonnegative matrices having few eigenvalues by Kirkland and Neumann \cite{KN95}, for periodic and nonperiodic Jacobi matrices by Chen, Kirkland and Neumann \cite{CKN95} and for general symmetric $M$-matrices whose underlying graphs are paths by Bendito, Carmona and Encinas \cite{BCE2012} and Carmona, Encinas and Mitjana \cite{CEM2013}. Recently, matrices whose group inverses are $M$-matrices were investigated by Kalauch, Lavanya and Sivakumar \cite{KLS2021}. 

We answer Question \ref{que:Mmatrix} for distance-biregular graphs, completing, together with the known results for distance-regular graphs \cite{BCEM2012}, the characterization of when the group inverse of the combinatorial Laplacian matrix of a distance--regularised  graph is an $M$--matrix.
\section{Preliminaries}
The triple  $\Gamma=(V,E,c)$ denotes a finite network; that is, a
finite connected graph without loops or multiple edges, with vertex
set $V$, whose cardinality equals $n\ge 2$, and edge set $E$, in which
each edge $\{x,y\}$ has been assigned a {\it conductance} $c(x,y)
>0$. The conductance can be considered as a symmetric function
$c\colon V\times V\longrightarrow [0,+\infty)$ such that $c(x,x)=0$
for any $x\in V$ and moreover, $x\sim y$, that is  vertex $x$ is
adjacent to vertex $y$, iff $c(x,y)
>0$. We define the {\it degree function} $k$ as $$k(x)=\sum\limits_{y\in
V}c(x,y)$$ for each $x\in V$. The usual distance from vertex $x$ to
vertex $y$ is denoted by $d(x, y)$ and $D = \max\{d(x, y) : x, y \in
V \}$ stands for the {\it diameter} of $\Gamma$. We denote as
$\Gamma_i(x)$ the set of vertices at distance $i$ from vertex $x$,
$\Gamma_i(x)=\{y:d(x,y)=i\},\, 0\le i\le D$ and define $k_i(x)= \big |\Gamma_i(x)\big|$. Then, $$B_i(x)=\sum\limits_{j=0}^i k_j(x)$$ is  the cardinal of the $i$-ball centered at $x$.  The {\it complement} of $\Gamma$ is defined as the graph $\overline{\Gamma}$ on the same
vertices such that two vertices are adjacent iff they are not
adjacent in $\Gamma$; that is $x\sim y$ in $\overline{\Gamma}$ iff
$c(x,y)=0$. More generally, for any $i=1,\ldots,D$, we denote by
$\Gamma_i$ the graph whose vertices are those of $\Gamma$ and in
which two vertices are adjacent iff they are  at distance $i$ in
$\Gamma$. Therefore for any $x\in V$, $\Gamma_i(x)$ is the set of
adjacent vertices to $x$ in $\Gamma_i$. Clearly $\Gamma_1$ is the
graph subjacent to the network $\Gamma$ and
$\Gamma_2=\overline{\Gamma}$ when $D=2$. 

The set of real-valued functions on $V$ is denoted by ${\cal
C}(V)$. When necessary, we identify the functions in  ${\cal C}(V)$
with vectors in   $\mathbb{R}^{|V|}$ and the endomorphisms of ${\cal
C}(V)$ with $|V|$-order square matrices.

The {\it combinatorial Laplacian} or simply the {\it Laplacian} of
the graph $\Gamma$ is the endomorphism of
${\cal C}(V)$  that assigns to each $u\in{\cal C}(V)$ the function
\begin{equation}
\label{laplacian}{\cal L}(u)(x)\displaystyle=\sum\limits_{y\in V}
c(x,y)\,\Big(u(x)-u(y)\Big)=k(x)u(x)-\sum\limits_{y\in V}
c(x,y)\,u(y),\hspace{.25cm}x\in V.
\end{equation}

It is well-known that ${\cal L}$ is a positive semi-definite
self-adjoint operator and has $0$ as its lowest eigenvalue whose
associated eigenfunctions are constant. So, ${\cal L}$ can be
interpreted as an irreducible, symmetric, diagonally dominant and
singular $M$-matrix, that in the sequel will be denoted as ${\sf L}$. Therefore, the Poisson equation
${\cal L}(u)=f$ on $V$ has solution iff $$\sum\limits_{x\in V}f(x)=0$$
and, when this happens, there exists a unique solution $u\in{\cal
C}(V)$ such that $\sum\limits_{x\in V}u(x)=0,$ see \cite{BCE00}.

The {\it Green operator} is the linear operator ${\cal G}:{\cal
C}(V)\longrightarrow {\cal C}(V)$ that assigns to any $f\in{\cal
C}(V)$ the unique solution of the Poisson equation 
${\cal L}(u)=f-\frac{1}{n}\sum\limits_{x\in V}f(x)$ such that $\sum\limits_{x\in
V}u(x)=0$. It is easy to prove that ${\cal G}$ is a positive
semi-definite  self-adjoint operator and has $0$ as its lowest
eigenvalue whose  associated eigenfunctions are constant. Moreover,
if ${\cal P}$ denotes the projection on the subspace of constant
functions then,
$${\cal L}\circ {\cal G}={\cal G}\circ{\cal L}={\cal I}-{\cal P}.$$
In addition, we define the {\it Green function} as $G:V\times
V\longrightarrow\mathbb{R}$ given by $G(x,y)={\cal
G}(\varepsilon_y)(x)$, where $\varepsilon_y$ stands for the Dirac
function at $y$. Therefore, interpreting ${\cal G}$, or $G$, as a
matrix  it is nothing else but ${\sf L}^\#$ the group inverse
inverse of ${\sf L}$, that coincides with its Moore-Penrose inverse. In
consequence, ${\sf L^\#}$ is a  \emph{$M$-matrix} iff ${\sf L^\#}(x,y)\le 0$ for
any $x,y\in V$ with $x\ne y$ and then ${\sf L^\#}$ can be identified with
the combinatorial Laplacian matrix of a new connected network with the same vertex set, that we denote by $\Gamma^\#$.

From now on we will say that a network $\Gamma$ has the {\it $M$-property} iff ${\sf L}^\#$ is an $M$-matrix; that is, if ${\sf L}$ provides an answer to Question \ref{que:Mmatrix}.

In \cite{BCE00} it was proved that for any $y\in V$,  there exists a unique
$\nu^y\in {\cal C} (V)$ such that $\nu^y(y)=0$,
$\nu^y(x)>0$ for any $x\not=y$ and satisfying
\begin{equation}
\label{L-equilibrio}
{\cal L}(\nu^y)=\1-n\varepsilon_y\hspace{.25cm}\hbox{on
$V$}.
\end{equation}
We call $\nu^y$ the {\it equilibrium measure of
$V\setminus\{y\}$} and then we define {\it capacity} as  the function $\ccap\in
{\cal C} (V)$ given by $\ccap(y)=\sum\limits_{x\in V}
\nu^y(x)$.

Following the ideas in \cite{BCE00,BCEM2012,U1997}, we define, for any $y\in V$, the {\it equilibrium array for $y$} as the set $\{\nu^y(x):x\in V\}$ of different values taken by the equilibrium measure of $y$, and we consider the {\it length of the equilibrium array} to be $\ell(y)=\big|\big\{\nu^y(x):x\in V\setminus\{y\}\big\}\big|$. Since $\Gamma$ is connected and $n\ge 2$, we obtain that $\ell(y)\ge 1$ for any $y\in V$. On the other hand, since $0=\nu^y(y)$ we obtain that   $\{\nu^y(x):x\in V\}=\{q_i(y): i=0,\ldots,\ell(y)\}$, where $0=q_0(y)<q_1(y)<\cdots<q_{\ell(y)}(y)$.  In addition, given $y\in V$ for any $i=0,\ldots,\ell(y)$, we define $m_i(y)=\big |\{x\in V: \nu^y(x)=q_i(y)\}\big|$. Clearly, for any $y\in V$ we have that

$$
 m_0(y)=1, \,\,\,
   n=\sum\limits_{i=0}^{\ell(y)}m_i(y),\,\,\,\mbox{and }
   \ccap(y)=\sum\limits_{i=1}^{\ell(y)}m_i(y)q_i(y).
$$


In \cite[Proposition 3.12]{BCEM2000shortest} it was shown that, for any $y\in V$, the equilibrium measure (and hence the equilibrium array) reflects the graph depth from $y$, since 
\begin{equation}
\label{distance}
\nu^y(x)=q_i(y) \Longrightarrow d(x,y)\le i
\end{equation}

\noindent and hence $$\sum\limits_{j=0}^i m_j(y)\le B_i(y)$$ for any $0\le i\le D$. In particular, \eqref{distance} implies that if $\nu^y(x)=q_1(y)$ then $x\sim y$; that is, that the minimum values of the equilibrium measure for $y$ are attained at  vertices adjacent to $y$ (in fact this a formulation of the so-called {\it minimum principle}). 

In general, when  $d(x,y)=i$, Property \eqref{distance}  only assures that $\nu^y(x)\ge q_i(y)$, but the inequality can be strict. In particular the length of some equilibrium arrays could be greater than $D$.

\begin{example}\label{ex:cycle}
To illustrate the above statements, consider the complete graph $K_3$ with vertex set $V=\{x_1,x_2,x_3\}$ and conductances $c_1=c(x_1,x_2)$, $c_2=c(x_2,x_3)$ and $c_3=c(x_3,x_1)$. Then, 

{\small{
\[\begin{array}{rlrlrl}
\nu^{x_1}(x_2)=&\hspace{-.25cm}\dfrac{2c_2+c_3}{c_1c_2+c_2c_3+c_3c_1}, &\hspace{-.25cm} \nu^{x_1}(x_3)=&\hspace{-.25cm}\dfrac{2c_2+c_1}{c_1c_2+c_2c_3+c_3c_1}, &\hspace{-.25cm}\ccap(x_1)=&\hspace{-.25cm}\dfrac{4c_2+c_1+c_3}{c_1c_2+c_2c_3+c_3c_1}\\[3ex]
\nu^{x_2}(x_1)=&\hspace{-.25cm}\dfrac{2c_3+c_2}{c_1c_2+c_2c_3+c_3c_1}, &\hspace{-.25cm} \nu^{x_2}(x_3)=&\hspace{-.25cm}\dfrac{2c_3+c_1}{c_1c_2+c_2c_3+c_3c_1}, &\hspace{-.25cm}\ccap(x_1)=&\hspace{-.25cm}\dfrac{4c_3+c_1+c_2}{c_1c_2+c_2c_3+c_3c_1}\\[3ex]
\nu^{x_3}(x_1)=&\hspace{-.25cm}\dfrac{2c_1+c_2}{c_1c_2+c_2c_3+c_3c_1}, & \hspace{-.25cm}\nu^{x_3}(x_2)=&\hspace{-.25cm}\dfrac{2c_1+c_3}{c_1c_2+c_2c_3+c_3c_1},&\hspace{-.25cm}\ccap(x_1)=&\hspace{-.25cm}\dfrac{4c_1+c_2+c_3}{c_1c_2+c_2c_3+c_3c_1}.
\end{array}
\]
}}

So, $D=1$, but $\ell(x_1)=1$ iff $c_1=c_3$, $\ell(x_2)=1$ iff $c_1=c_2$ and $\ell(x_3)=1$ iff $c_2=c_3$.

\end{example}

The group inverse of the Laplacian matrix and the equilibrium measures provide an equivalent information about the network structure, since the expression of $\LL^\#$ can  be obtained from  equilibrium measures  and conversely. Specifically, see \cite[Proposition 3.9]{BCE00}, the group inverse $\LL^\#$  is given by
\begin{equation}
\label{green-f}
\LL^\#(x,y)=\dfrac{1}{n^2}\big(\ccap(y)-n\,\nu^y(x)\big)
\end{equation}
and this equality also implies that $\ccap(y)=n^2\LL^\#(y,y)$ and that 
\begin{equation}
\label{equil-green}
\nu^y(x)=n\,\big(\LL^\#(y,y)-\LL^\#(x,y)\big),\hspace{.25cm}x,y\in V.
\end{equation}
In addition, the symmetry of the group inverse  leads to the following relation for the equilibrium measures 
\begin{equation}
\label{equil-sym}
\nu^y(x)-\nu^x(y)=\frac{1}{n}\big(\ccap(y)-\ccap(x)\big)=n\,\big(\LL^\#(y,y)-\LL^\#(x,x)\big),\hspace{.25cm}x,y\in V.
\end{equation}

From (\ref{green-f}) the minimum principle states that
a network $\Gamma$ has the $M$-property iff for any $y\in V$ 
\begin{equation}
\label{measure-m}
\ccap(y)\le n\nu^y(x) \quad \hbox{ for any } x \sim y,
\end{equation}
see \cite[Theorem 1]{BCEM2012}. In this case,  $\overline{\Gamma}$ is a subgraph of the subjacent graph of $\Gamma^\#$.  %
In fact, to achieve the $M$-property it is sufficient to  satisfy that 
$$\sum\limits_{i=1}^{\ell(y)}m_i(y)q_i(y)\le nq_1(y)$$
for any $y\in V$. Since this inequality trivially holds when $\ell(y)=1$, and assuming the common agreement that empty sum equals $0$, we have that $\Gamma$ has the $M$-property iff 
\begin{equation}
\label{equil-m}
\sum\limits_{i=2}^{\ell(y)}m_i(y)\big(q_i(y)-q_1(y)\big)\le q_1(y)
\end{equation}
for any $y\in V$. Therefore, when $\ell(y)=1$ for any $y\in V$, then $\Gamma$ is a complete network and moreover satisfies the $M$-property. As Example \ref{ex:cycle} shows, a complete network does not necessarily satisfy the $M$-property: $K_3$ has the $M$-property if and only if $3\max\{c_1,c_2,c_3\}\le 2(c_1+c_2+c_3)$. In particular, if $c_1=c_2=c_3$, then $K_3$ has the $M$-property, but  if  for instance $c_3>2(c_1+c_2)$, then $K_3$ does  not satisfy the $M$-property.

\section{Group inverse for distance-biregular graphs}\label{sec:greenfunction}

 We say that the graph $\Gamma=(V,E)$ is \emph{semiregular} if $\Gamma$ is bipartite with $V=V_0 \cup V_1$ and there are numbers $k_0$ and $k_1$ such that each vertex in $V_0$ has $k_0$ neighbors and each vertex in $V_1$ has $k_1$ neighbors. In this case, we define $D_\ell=\max \{d(x,y):\,\, y\in V, x\in V_\ell\}$, $\ell=0,1$. Moreover, for $\ell=0,1$, we denote by $\bar\ell=1-\ell.$ In the sequel without loss of generality we always suppose that  $1\le D_0\le D_1$.

A connected graph $\Gamma$ is a  \emph{distance-biregular graph} if  $\Gamma$ is semiregular and for any two vertices $x$ and $y$ at distance $i$, the numbers $|\Gamma_{i-1}(x) \cap \Gamma_{1}(y)|$ and $|\Gamma_{i+1}(x) \cap \Gamma_{1}(y)|$ only depend on $i$ and on the stable set where $x$ is.

Examples of distance-biregular graphs are the subdivision graph of minimal $(k,g)$-cages. In particular, the subdivision graph of the Petersen graph is a distance-biregular graph. Also, any bipartite distance-regular graph is a distance-biregular graph with $k_0=k_1$.

For $x\in V_\ell,$ $\ell=0,1$, we define the {\it intersection numbers} by $c_{\ell,i}=|\Gamma_{i-1}(x) \cap \Gamma_{1}(y)|$ and $b_{\ell,i}=|\Gamma_{i+1}(x) \cap \Gamma_{1}(y)|$, $i=0,\ldots,D_\ell$, with the usual agreement $c_{\ell,0}=b_{\ell,D_\ell}=0$. Clearly, for $\ell=0,1$ it is satisfied that $b_{\ell,0}=k_\ell$, $c_{\ell,1}=1$, $b_{\ell,1}=k_{\bar \ell}-1$, $\ell=0,1$ and more generally for any $i\in \{0,\ldots, D_\ell \}$ the following holds
\begin{align*}
 c_{\ell,i}+b_{\ell,i}=\left\{ \begin{array}{cc} 
                k_\ell & \text{if } i \text{ is even,}\\
                k_{\bar\ell} &  \text{if } i \text{ is odd.} \\
                \end{array}   \right.
\end{align*}
Therefore, a distance-biregular graph has a double intersection array which will be denoted by
\begin{align*}
 \big\{ k_\ell;  c_{\ell,1},  \ldots  c_{\ell,D_\ell} 
               \big\}, \,\,\,\ell=0,1.
\end{align*}
If, for $i\in \{0,\ldots, D_\ell \}$ and $x\in V_\ell$ we define $k_{\ell,i}=|\Gamma_{i}(x)|$ and $B_{\ell,i}=\sum\limits_{j=0}^ik_{\ell,j}$. Then, $k_{\ell,0}=1$,  $k_{\ell,1}=k_{\ell}$ and $n=B_{\ell,D_\ell}$, $\ell=0,1$ and moreover the following  relationships hold, see the Lemmas 2.1-2.8 in \cite[Section 2.1]{A1990} and the  references therein.
\begin{lemma} \label{propertiess}If $\Gamma$ is  a distance-biregular graphs with intersection arrays $\big\{ k_\ell;  c_{\ell,1},  \ldots  c_{\ell,D_\ell} 
               \big\}$, $\ell=0,1$. Then,
        
\begin{itemize}
\item[$(i)$] $0\le D_1-D_0\le 1$ and when $D_1=D_0+1$, then $D_0$ is odd.
\item[$(ii)$] For $\ell=0,1$, $$\displaystyle k_{\ell,i}=\prod_{j=0}^{i-1} \frac{b_{\ell,j}}{c_{\ell,j+1}},\quad i=0,\ldots,D_\ell$$

and hence, $$k_{\ell,i}b_{\ell,i}=k_{\ell,i+1}c_{\ell,i+1},\quad  i=0,\ldots,D_\ell.$$
\item[$(iii)$] $k_0k_{1,2i+1}=k_1k_{0,2i+1}$, for any $i=0,\ldots,\lfloor\frac{D_0-1}{2}\rfloor$.  
\item[$(iv)$] $c_{0,2i}c_{0,2i+1}=c_{1,2i}c_{1,2i+1}$ and  $b_{0,2i-1}b_{0,2i}=b_{1,2i-1}b_{1,2i}$ for any $i=1,\ldots,\lfloor\frac{D_0-1}{2}\rfloor$.
\item[$(v)$] For $\ell=0,1$, $1\le c_{\ell,i}\leq c_{\bar\ell,i+1}$ and $b_{\ell,i}\geq  b_{\bar\ell,i+1}$ for any $i=0,\ldots,D_{\bar\ell}-1$. Moreover,  $b_{\ell,i}\geq  c_{\bar\ell,i+1},\hspace{.25cm} i=1,\ldots,D_{\bar\ell}-2$. 
%
\item[$(vi)$] For $\ell=0,1$, $c_{\ell,2}\le {c_{ \ell,3}-1\choose c_{\bar\ell,2}-1}$. 
 \item[$(vii)$]  For $\ell=0,1$, if $i+j$ is even and $i+j\le D_\ell$, then $c_{\ell,i}\leq b_{\ell,j}.$
    \item [$(viii)$] For $\ell=0,1$, if $i+j$ is odd and $i+j\le  D_0$, then $c_{\ell,i}\leq b_{\bar\ell,j}$ and $c_{\bar\ell,i}\leq b_{\ell,j}$.
\end{itemize}
\end{lemma}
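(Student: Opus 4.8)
The plan is to treat the eight items of Lemma~\ref{propertiess} separately; all of them are classical properties of the two intersection arrays of a distance-biregular graph, collected in \cite[Section~2.1]{A1990} and \cite[Chapter~4]{BCN89}, and the proofs I would give are the standard double-counting and geodesic-chasing arguments, with the extra care needed to keep track of which stable set each vertex of a geodesic belongs to. For (i), I would first note that every vertex of $V_1$ has a neighbour in $V_0$, so its eccentricity is at most $1+D_0$; hence $D_1\le D_0+1$, which together with the standing assumption $D_0\le D_1$ gives $0\le D_1-D_0\le 1$. If $D_1=D_0+1$ and $D_0$ were even, then $D_1$ would be odd, so a pair of vertices realising $D_1$, one of them in $V_1$, would have its other endpoint in $V_0$ by bipartiteness, forcing that endpoint to have eccentricity at least $D_0+1>D_0$, a contradiction; so $D_0$ is odd.

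For (ii), fixing $x\in V_\ell$ and double-counting the edges joining $\Gamma_i(x)$ to $\Gamma_{i+1}(x)$ yields $k_{\ell,i}b_{\ell,i}=k_{\ell,i+1}c_{\ell,i+1}$, and the product formula follows by induction from $k_{\ell,0}=1$. For (iii), I would count the ordered pairs $(x,y)\in V_0\times V_1$ with $d(x,y)=2i+1$ from both sides, getting $|V_0|\,k_{0,2i+1}=|V_1|\,k_{1,2i+1}$, and combine this with the edge count $|V_0|\,k_0=|V_1|\,k_1$. For (iv), I would use that the number of geodesics between a fixed vertex of $V_0$ and a fixed vertex of $V_1$ at distance $2i+1$ can be computed from either endpoint, which shows that $\prod_{m=2}^{2i+1}c_{\ell,m}$ does not depend on $\ell$; similarly, counting the ordered geodesics of length $2i+1$ by their initial vertex and using the edge count shows that $\prod_{m=1}^{2i}b_{\ell,m}$ does not depend on $\ell$. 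Taking ratios of consecutive such products telescopes to the two claimed identities (alternatively, both follow from (ii) and (iii)).

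For (v), (vii) and (viii), I would run the classical ``subset'' argument: along a geodesic of length $i+j$ from $x$ to $y$ through a vertex $z$ at distance $i$ from one end and $j$ from the other, each neighbour of $y$ that lies in a prescribed distance layer from one end is forced, by two applications of the triangle inequality, into a prescribed distance layer from $z$; this produces the inclusions between $c$-sets and $b$-sets that give $1\le c_{\ell,i}\le c_{\bar\ell,i+1}$, $b_{\ell,i}\ge b_{\bar\ell,i+1}$, $b_{\ell,i}\ge c_{\bar\ell,i+1}$, and the parity-indexed bounds $c_{\ell,i}\le b_{\ell,j}$ and $c_{\bar\ell,i}\le b_{\ell,j}$ (the correct ranges of $i$ being exactly those for which the relevant distances are realised in each stable set, which is where (i) enters). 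The one item that is not a routine counting argument is (vi): there I would analyse the local incidence structure on the neighbourhood of a vertex $x\in V_\ell$ --- the vertices of $\Gamma_2(x)$ play the role of blocks of size $c_{\ell,2}$ on the point set $\Gamma_1(x)$, with every pair of points lying in exactly $c_{\bar\ell,2}-1$ of these blocks, and the vertices of $\Gamma_3(x)$ controlling how many blocks two blocks can share --- and extract the binomial inequality as a Fisher-type counting bound, following \cite[Section~2.1]{A1990}.

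I expect (vi) to be the main obstacle: unlike the other seven items it does not reduce to counting edges or geodesics, and obtaining the exact binomial coefficient (rather than a cruder polynomial estimate) requires pinning down the design-like structure around a single vertex. A secondary, purely bookkeeping difficulty running through the whole proof is keeping the parities of the distances straight --- which stable set the endpoints and intermediate vertices of a geodesic lie in --- and, relatedly, verifying the precise index ranges stated for (iii)--(viii).
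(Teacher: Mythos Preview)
The paper does not actually prove this lemma: it states the result and refers the reader to \cite[Section~2.1, Lemmas~2.1--2.8]{A1990} for the proofs. Your proposal sketches exactly the standard double-counting and geodesic arguments that appear there, so your approach coincides with what the paper defers to; there is nothing to compare.
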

The properties $(ii)$, $(iii)$ and $(iv)$ imply that for $\ell=0,1$, the intersection numbers $\{c_{\ell,i},b_{\ell,i}\}$ are determined by the intersection numbers $\{c_{\bar \ell,i},b_{\bar \ell,i}\}$. In particular both sequences are the same iff $k_0=k_1$ and in this case $\Gamma$ is a (bipartite) distance-regular graph.

\begin{lemma} \cite[Corollary 2.11]{A1990}\label{lem:DBRGdiametercases}
Let $\Gamma$ be a distance-bipartite regular graph. We can assume w.l.o.g. that one of the following holds
\begin{enumerate}
    \item $D_0=D_{1}$ and $k_0=k_{1}$; so $\Gamma$ is a bipartite distance-regular graph.
    \item $D_0=D_1-1$ is odd and $k_0>k_1$.
    \item $D_0=D_{1}$ is even and $k_0>k_{1}$.
\end{enumerate} 
\end{lemma}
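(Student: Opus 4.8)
The plan is to reduce everything to Lemma~\ref{propertiess} together with the remark following it, namely that $k_0=k_1$ holds if and only if the two intersection arrays coincide, in which case $\Gamma$ is bipartite distance-regular. First I would normalise: interchanging the roles of $V_0$ and $V_1$ if necessary, assume $k_0\ge k_1$. If $k_0=k_1$, then the two intersection arrays coincide, so in particular $D_0=D_1$ and $\Gamma$ is a bipartite distance-regular graph; this is case~1. So from now on assume $k_0>k_1$. By Lemma~\ref{propertiess}$(i)$ we have $D_1\in\{D_0,D_0+1\}$. If $D_1=D_0+1$, then Lemma~\ref{propertiess}$(i)$ forces $D_0$ to be odd, which is case~2. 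Thus it only remains to prove that if $D_0=D_1$ and $k_0>k_1$, then $D_0$ is even.

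Assume for contradiction that $D_0=D_1=D$ is odd, say $D=2m+1$. Since $b_{\ell,D}=0$ and $D$ is odd, the degree identity $c_{\ell,D}+b_{\ell,D}=k_{\bar\ell}$ gives $c_{\ell,D}=k_{\bar\ell}$ for $\ell=0,1$. The core of the argument is the following claim, proved by downward induction on $j=2m,2m-1,\ldots,1$: for $j$ even one has $k_1c_{0,j}=k_0c_{1,j}$ and $k_1b_{0,j}=k_0b_{1,j}$, while for $j$ odd one has $k_0c_{0,j}=k_1c_{1,j}$ and $k_0b_{0,j}=k_1b_{1,j}$. For the base case $j=2m$, Lemma~\ref{propertiess}$(iv)$ gives $c_{0,2m}c_{0,2m+1}=c_{1,2m}c_{1,2m+1}$, and substituting $c_{\ell,2m+1}=k_{\bar\ell}$ yields $k_1c_{0,2m}=k_0c_{1,2m}$; the identity $b_{\ell,2m}=k_\ell-c_{\ell,2m}$ then gives $k_1b_{0,2m}=k_0b_{1,2m}$. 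For the inductive step from $j$ to $j-1$: if $j$ is even, the relation $b_{0,j-1}b_{0,j}=b_{1,j-1}b_{1,j}$ from Lemma~\ref{propertiess}$(iv)$, together with the inductive hypothesis $k_1b_{0,j}=k_0b_{1,j}$ and $b_{1,j}>0$ (valid because $j\le 2m<D_1$), gives $k_0b_{0,j-1}=k_1b_{1,j-1}$, and then $c_{\ell,j-1}=k_{\bar\ell}-b_{\ell,j-1}$ yields $k_0c_{0,j-1}=k_1c_{1,j-1}$; if $j$ is odd the argument is symmetric, using the relation $c_{0,j-1}c_{0,j}=c_{1,j-1}c_{1,j}$ from Lemma~\ref{propertiess}$(iv)$ and $c_{1,j}\ge 1$. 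Evaluating the claim at $j=1$ and using $c_{0,1}=c_{1,1}=1$ gives $k_0=k_1$, contradicting $k_0>k_1$. Hence $D_0$ is even, which is case~3, completing the proof.

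The routine parts are the degree identities $c_{\ell,i}+b_{\ell,i}\in\{k_\ell,k_{\bar\ell}\}$ and keeping track of which instance of Lemma~\ref{propertiess}$(iv)$ applies at each index, including checking the index ranges ($i=j/2$ for even $j$ and $i=(j-1)/2$ for odd $j$, both lying in $\{1,\ldots,m\}$ for $D_0=2m+1$). I expect the main obstacle to be organising the induction so that the structural identities in Lemma~\ref{propertiess}$(iv)$, anchored at the far end $j=D$ where $b_{\ell,D}=0$ forces $c_{\ell,D}=k_{\bar\ell}$, propagate the proportionality down to $j=1$, where it collides with $c_{\ell,1}=1$. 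I would also double-check that every division performed in the induction is by a strictly positive quantity, i.e.\ $b_{\ell,j}>0$ for $j<D_\ell$ and $c_{\ell,j}\ge 1$ for $1\le j\le D_\ell$, both of which follow from Lemma~\ref{propertiess}.
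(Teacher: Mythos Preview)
The paper does not give its own proof of this statement; it is quoted from \cite[Corollary~2.11]{A1990}. So there is no in-paper argument to compare against, and your proof must stand on its own.

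Your downward induction showing that $D_0=D_1$ odd forces $k_0=k_1$ is correct and well organised. However, the surrounding case analysis has a genuine gap. You first relabel so that $k_0\ge k_1$, and then invoke Lemma~\ref{propertiess}$(i)$ to conclude $D_1\in\{D_0,D_0+1\}$. But Lemma~\ref{propertiess}$(i)$ is stated under the paper's standing convention $D_0\le D_1$, which your relabelling may have destroyed: if the original labels (with $D_0\le D_1$) happened to satisfy $k_0<k_1$, then after your swap the new labels satisfy $k_0>k_1$ \emph{and} $D_0\ge D_1$, so in those labels one gets $D_1\in\{D_0-1,D_0\}$, not $\{D_0,D_0+1\}$. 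Equivalently, you never establish the content of case~2, namely that when $D_0<D_1$ (hence $D_0$ odd by~$(i)$) one necessarily has $k_0>k_1$. Your induction does not cover this situation: its base case relies on $c_{\ell,D}=k_{\bar\ell}$ holding for \emph{both} $\ell$ at the common top index $D=D_0=D_1$, whereas for $D_0=2m+1<D_1=2m+2$ one only has $c_{0,2m+1}=k_1$ while $c_{1,2m+1}<k_0$ (since $b_{1,2m+1}>0$), so the anchor no longer yields a proportionality to propagate downward. Closing this case --- that the partite class of strictly smaller eccentricity has strictly larger valency --- needs a separate argument, which is part of what \cite[Corollary~2.11]{A1990} actually supplies and which does not follow from Lemma~\ref{propertiess} via your induction alone.
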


 We display some preliminary results about the intersection parameters of a distance-biregular graphs, whose proofs are omitted since they follow trivially from \cite{A1990}.

\begin{lemma}
If $k_0>k_{1}$, then  
$$\begin{array}{rl}
\dfrac{b_{1, i}}{b_{0, i}}< \dfrac{k_{1}}{k_0}< \dfrac{c_{1,i}}{c_{0,i}}, &  \mbox{ if } i \mbox{ is even},\\[2ex]
\dfrac{b_{0, i}}{b_{1, i}}< \dfrac{k_{1}}{k_0}< \dfrac{c_{0,i}}{c_{1,i}}, &  \mbox{ if } i \mbox{ is odd}.
\end{array}$$
\end{lemma}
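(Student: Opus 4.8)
The plan is to derive all four inequalities from Lemma~\ref{propertiess} together with the basic relations $c_{\ell,i}+b_{\ell,i}=k_\ell$ (for $i$ even) and $c_{\ell,i}+b_{\ell,i}=k_{\bar\ell}$ (for $i$ odd), specializing property $(iv)$ (the products $c_{0,i}c_{0,i+1}=c_{1,i}c_{1,i+1}$ and $b_{0,i}b_{0,i+1}=b_{1,i}b_{1,i+1}$ across parities) and property $(iii)$ ($k_0k_{1,i}=k_1k_{0,i}$ for odd $i$), which encode exactly the kind of cross-multiplicative identity between the two intersection sequences needed here.

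First I would treat the case $i$ even. From $(iii)$ applied at the odd index $i+1$ (and at $i-1$) one gets $k_0 k_{1,i\pm1}=k_1 k_{0,i\pm1}$, and combining this with the recurrence in $(ii)$, $k_{\ell,i}b_{\ell,i}=k_{\ell,i+1}c_{\ell,i+1}$, should yield a clean relation between the ratios $b_{1,i}/b_{0,i}$ and $k_{1,i}/k_{0,i}$, and similarly between $c_{1,i}/c_{0,i}$ and $k_{1,i}/k_{0,i}$. The key structural input is that, for $i$ even, $b_{\ell,i}$ and $c_{\ell,i}$ sum to $k_\ell$, so $\frac{b_{1,i}}{b_{0,i}}<\frac{k_1}{k_0}$ is equivalent to $b_{1,i}k_0<b_{0,i}k_1$, i.e.\ $b_{1,i}k_0<b_{0,i}k_1$, and since $c_{\ell,i}=k_\ell-b_{\ell,i}$ this is in turn equivalent to $c_{0,i}k_1<c_{1,i}k_0$, which is exactly $\frac{k_1}{k_0}<\frac{c_{1,i}}{c_{0,i}}$; so the two halves of the even-$i$ claim are the \emph{same} inequality rewritten, and it suffices to prove one strict inequality. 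I would prove $b_{1,i}k_0<b_{0,i}k_1$ by substituting the product formula from $(ii)$ for $k_{\ell,i}$, using $(iii)$ to cancel the odd-indexed factors, and reducing to a strict inequality among a telescoping product of intersection ratios that is controlled by $k_0>k_1$ through the monotonicity statements in $(v)$; the strictness comes from the fact that somewhere along the way a factor $k_0/k_1>1$ (or an honest strict instance of $(v)$) is introduced.

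For $i$ odd the argument is the mirror image: now $c_{\ell,i}+b_{\ell,i}=k_{\bar\ell}$, so the roles of $k_0$ and $k_1$ on the two sides get swapped, which is precisely why the claimed inequality flips to $\frac{b_{0,i}}{b_{1,i}}<\frac{k_1}{k_0}<\frac{c_{0,i}}{c_{1,i}}$. The equivalence of the two halves goes through as before: $b_{0,i}k_0<b_{1,i}k_1$ iff $(k_1-c_{0,i})k_0<(k_0-c_{1,i})k_1$ iff $c_{1,i}k_1<c_{0,i}k_0$, i.e.\ $\frac{k_1}{k_0}<\frac{c_{0,i}}{c_{1,i}}$. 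So again one strict inequality suffices, and I would establish it by the same reduction: write $k_{\ell,i}$ via $(ii)$, invoke $(iii)$ at the neighboring odd index and $(iv)$ to pair up the even-indexed factors across $\ell=0$ and $\ell=1$, and finish with $(v)$ plus $k_0>k_1$. In both parities one can alternatively argue by induction on $i$, using the recurrences in $(ii)$ to pass from step $i$ to step $i+2$ and carrying the strict inequality along.

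The main obstacle I expect is bookkeeping rather than conceptual: making sure the telescoping product of ratios from $(ii)$ is collapsed correctly using $(iii)$ and $(iv)$ so that only a net factor comparing $k_0$ and $k_1$ survives, and checking that strictness is genuinely propagated (i.e.\ that no degenerate case with $k_0=k_1$ sneaks in, which is excluded by hypothesis). A secondary subtlety is the range of $i$: properties $(iii)$ and $(iv)$ are only stated up to $i=\lfloor\frac{D_0-1}{2}\rfloor$, so for indices $i$ close to $D_0$ or $D_1$ one may need to fall back on $(v)$, $(vii)$ and $(viii)$ directly, or on Lemma~\ref{lem:DBRGdiametercases} to know the parity of $D_0$ in the case $D_1=D_0+1$; I would handle those boundary indices as a short separate check at the end.
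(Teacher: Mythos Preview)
The paper does not actually prove this lemma: it states explicitly that the proof is ``omitted since it follows trivially from \cite{A1990}'', so there is no argument here to compare yours against.

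Your reduction is correct and is in fact the heart of the matter. For $i$ even, using $c_{\ell,i}+b_{\ell,i}=k_\ell$ one checks directly (as you do) that $b_{1,i}k_0<b_{0,i}k_1$ is equivalent to $c_{0,i}k_1<c_{1,i}k_0$, so the two halves of the even-$i$ claim coincide; the odd-$i$ case is analogous with $k_{\bar\ell}$ in place of $k_\ell$. What remains is to establish the one surviving strict inequality at each $i$, and your inductive idea via Lemma~\ref{propertiess}(iv) works cleanly: from $b_{0,2j-1}b_{0,2j}=b_{1,2j-1}b_{1,2j}$ one gets $\dfrac{b_{0,2j-1}}{b_{1,2j-1}}=\dfrac{b_{1,2j}}{b_{0,2j}}$, so the odd $b$-inequality at $2j-1$ is literally the even $b$-inequality at $2j$; then your equivalence turns this into the even $c$-inequality at $2j$, and $c_{0,2j}c_{0,2j+1}=c_{1,2j}c_{1,2j+1}$ carries it to the odd $c$-inequality at $2j+1$, etc. The chain is anchored at $i=1$, where $\dfrac{b_{0,1}}{b_{1,1}}=\dfrac{k_1-1}{k_0-1}<\dfrac{k_1}{k_0}$ follows immediately from $k_0>k_1$. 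So the approach is sound; the vaguer alternative you describe (expanding $k_{\ell,i}$ via (ii) and telescoping) is unnecessary once the induction is set up this way. Your caveat about boundary indices is legitimate---the identities in (iv) are only stated for $i\le \lfloor\frac{D_0-1}{2}\rfloor$, and at $i=0$ or $i=D_\ell$ some ratios are undefined---but for the range where the lemma is actually used in the paper this is harmless.
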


The result provides an explicit expression of the equilibrium measure for sets $V\setminus\{y\}, \forall y\in V$ of distance-biregular graphs.
\begin{proposition}
\label{eq_distancebiregular}
  Let $\Gamma$ be a distance-biregular graph with $V=V_0 \cup V_1$. Then, for any $\ell=0,1$, there exists an array $q_{\ell}$ of length $D_{\ell}$ such that if $x\in V_\ell$, for any $y\in V$ it holds
  $$\nu^{x}(y)=q_{\ell,m}\,\, \Longleftrightarrow\,\, d(x,y)=m, \,\, m=0,\ldots,D_{\ell}.$$ 
  Moreover, 
  $$q_{\ell,m}=\displaystyle\sum\limits_{j=0}^{m-1}\dfrac{n-B_{\ell,j}}{k_{\ell,j}b_{\ell,j}}=\sum\limits_{j=1}^{m}\dfrac{n-B_{\ell,j-1}}{k_{\ell,j}c_{\ell,j}}.$$
In particular,
$q_{\ell,1}=\dfrac{n-1}{k_{\ell}}$ and $q_{\ell,2}=\dfrac{n-1}{k_{\ell}}+\dfrac{n-1-k_{\ell}}{k_{\ell}(k_{\bar\ell}-1)}.$
     \end{proposition}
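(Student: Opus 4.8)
The plan is to write down an explicit candidate for $\nu^x$, verify that it satisfies the two defining conditions of the equilibrium measure in \eqref{L-equilibrio}, and then conclude by uniqueness. Fix $\ell\in\{0,1\}$ and $x\in V_\ell$, set $q_{\ell,0}=0$ and $q_{\ell,m}=\sum_{j=0}^{m-1}\frac{n-B_{\ell,j}}{k_{\ell,j}b_{\ell,j}}$ for $1\le m\le D_\ell$, and define $u\in{\cal C}(V)$ by $u(y)=q_{\ell,d(x,y)}$ (well defined since $d(x,y)\le D_\ell$ for every $y$). Since $k_{\ell,j},b_{\ell,j}>0$ and $B_{\ell,j}<B_{\ell,D_\ell}=n$ for $0\le j\le D_\ell-1$, the values $q_{\ell,0}<q_{\ell,1}<\cdots<q_{\ell,D_\ell}$ are strictly increasing; hence $u(x)=0$, $u(y)>0$ for $y\ne x$, and the equilibrium array of $x$ has length $D_\ell$. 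Using Lemma \ref{propertiess}$(ii)$ in the form $k_{\ell,j}b_{\ell,j}=k_{\ell,j+1}c_{\ell,j+1}$, a term-by-term reindexing of the sum gives the second expression $q_{\ell,m}=\sum_{j=1}^{m}\frac{n-B_{\ell,j-1}}{k_{\ell,j}c_{\ell,j}}$, and in particular, for $1\le m\le D_\ell$,
\[
q_{\ell,m}-q_{\ell,m-1}=\frac{n-B_{\ell,m-1}}{k_{\ell,m-1}b_{\ell,m-1}}=\frac{n-B_{\ell,m-1}}{k_{\ell,m}c_{\ell,m}} .
\]

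Next I would check that ${\cal L}(u)=\1-n\varepsilon_x$ on $V$. For $y=x$, all $k_\ell$ neighbours of $x$ lie in $\Gamma_1(x)$, so ${\cal L}(u)(x)=k_\ell q_{\ell,0}-k_\ell q_{\ell,1}=-k_\ell\cdot\tfrac{n-1}{k_\ell}=-(n-1)$, which is the value of $\1-n\varepsilon_x$ at $x$. For $y\ne x$ with $d(x,y)=m$, bipartiteness forces every neighbour $z$ of $y$ to satisfy $d(x,z)\in\{m-1,m+1\}$ (the two distances have opposite parity and differ by at most one), and distance-biregularity gives that exactly $c_{\ell,m}$ of the neighbours are at distance $m-1$ and exactly $b_{\ell,m}$ at distance $m+1$, these numbers depending only on $m$ and on $\ell$. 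Since $c_{\ell,m}+b_{\ell,m}=\deg(y)$, combining this with the displayed identity (and again $k_{\ell,j}b_{\ell,j}=k_{\ell,j+1}c_{\ell,j+1}$) yields
\[
{\cal L}(u)(y)=c_{\ell,m}\big(q_{\ell,m}-q_{\ell,m-1}\big)-b_{\ell,m}\big(q_{\ell,m+1}-q_{\ell,m}\big)=\frac{n-B_{\ell,m-1}}{k_{\ell,m}}-\frac{n-B_{\ell,m}}{k_{\ell,m}}=\frac{k_{\ell,m}}{k_{\ell,m}}=1 ,
\]
and for the extreme case $m=D_\ell$ one uses $b_{\ell,D_\ell}=0$, $\deg(y)=c_{\ell,D_\ell}$ and $B_{\ell,D_\ell}=n$, so that the same computation again gives $1$. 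Thus ${\cal L}(u)=\1-n\varepsilon_x$, and by uniqueness of the equilibrium measure of $V\setminus\{x\}$ \cite{BCE00} we conclude $u=\nu^x$; this proves the equivalence $\nu^x(y)=q_{\ell,m}\Longleftrightarrow d(x,y)=m$ together with both expressions for $q_{\ell,m}$.

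The stated special values follow by specialising the formula: $B_{\ell,0}=k_{\ell,0}=1$ gives $q_{\ell,1}=\frac{n-1}{k_\ell}$, and $k_{\ell,1}=k_\ell$, $b_{\ell,1}=k_{\bar\ell}-1$, $B_{\ell,1}=1+k_\ell$ give $q_{\ell,2}=q_{\ell,1}+\frac{n-1-k_\ell}{k_\ell(k_{\bar\ell}-1)}$.

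The only genuinely delicate point is the boundary bookkeeping in the Laplacian computation: one must confirm that a vertex $y$ with $d(x,y)=D_\ell$ has all of its neighbours at distance $D_\ell-1$ from $x$ (which is exactly where the convention $b_{\ell,D_\ell}=0$ enters) and that consequently $c_{\ell,D_\ell}=\deg(y)$, together with the symmetric handling of $y=x$. Away from these two endpoints the verification is a telescoping identity entirely driven by Lemma \ref{propertiess}$(ii)$, so I expect that part to be routine.
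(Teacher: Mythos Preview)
Your proof is correct and follows essentially the same approach as the paper: exhibit a function depending only on $d(x,\cdot)$, check that it solves the equilibrium system \eqref{L-equilibrio}, and conclude by uniqueness. The paper phrases this as ``assume $\nu^x$ is radial and solve the resulting three-term recurrence for the $q_{\ell,i}$'', whereas you write down the closed form and verify it directly; these are two sides of the same argument, and your version is arguably tidier in that it explicitly handles the endpoint cases $y=x$ and $m=D_\ell$ and checks positivity (strict monotonicity of the $q_{\ell,m}$), points the paper leaves implicit.
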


\begin{proof}
Take $x\in V_\ell$ with $\ell=0,1$. Assume that the value $\nu^x(y)$ depends only on the distance from $x$ to $y$, that is, there exists $q_{\ell,i}$, $i=1,\ldots,D_{\ell}$ such that $\nu^{x}(y)=q_{\ell,i}\,\, \Longleftrightarrow\,\, d(x,y)=i.$ Moreover, we define $q_{_{\ell,D_{\ell}+1}}=0$. Note that, since the equilibrium system $\L\nu^x(y)=1$ for all $y\in V\setminus\{x\}$ has a unique solution, then if with our hypothesis we can solve the system, such solution must correspond to the equilibrium measure $\nu^x(y)=q_{\ell,i}$.

In our case, $\L\nu^x(y)=1$ for all $y\in V\setminus\{x\}$ is equivalent to the  system
$$ (b_{\ell,i}+c_{\ell,i})q_{\ell,i}-c_{\ell,i}q_{\ell,i-1}-b_{\ell,i}q_{\ell,i+1}=1,\,\,\,\, i=1,\ldots, D_{\ell}$$
where 
$\ell=0,1$. Multiplying by $k_{\ell,i}$, we obtain
$$ k_{\ell,i} c_{\ell,i}(q_{\ell,i}-q_{\ell,i-1}) - k_{\ell,i}b_{\ell,i}(q_{\ell,i+1}-q_{\ell,i}) = k_{\ell,i},\,\,\,\, i=1,\ldots, D_{\ell}.$$

Since $k_{\ell,i}c_{\ell,i}=k_{\ell,i-1}b_{\ell,i-1}$ and denoting  $\gamma_{\ell,i}=k_{\ell,i}b_{\ell,i}(q_{\ell,i+1}-q_{\ell,i})$, then
$$\gamma_{\ell,{i-1}} - \gamma_{\ell,i} = k_{\ell,i}, \qquad \text{for } i=1,\ldots, D_{\ell}.$$

Observing that $\gamma_{_{\ell,D_{\ell}}}=0$, then summing up
$$n-B_{\ell,j}=\sum\limits_{i=j+1}^{D_{\ell}}k_{\ell,i}=\gamma_{\ell,j}-\gamma_{_{\ell,D_{\ell}}}=\gamma_{\ell,j}$$
for $j=0,\ldots, D_{\ell}-1$, it follows
$$q_{\ell,i+1}-q_{\ell,i}=\frac{n-B_{\ell,i}}{k_{\ell,i}b_{\ell,i}}, \qquad \text{for }i=0,\ldots, D_{\ell}-1.$$

Finally, since $q_{\ell,0}=0$, it follows
$$q_{\ell,m}=\sum\limits_{j=0}^{m-1} \frac{n-B_{\ell,j}}{k_{\ell,j}b_{\ell,j}}, \qquad \text{for }m=0,\ldots, D_{\ell}.$$

The expression for $q_{\ell,m}$ in terms of $c_{\ell,i}$ follows from Lemma \ref{propertiess} $(ii)$.
\end{proof}

The above proposition motives the definition of equilibrium arrays for a distance-biregular graph. If $\Gamma$ is a distance-biregular graph, we call \emph{equilbrium arrays} to the values $q_{\ell,i},\,\, \ell=0,1$ and $i=0,\ldots,D_{\ell}$. Denote  $m_{\ell,i}=\big|\{y\in V: \nu^x(y)=q_{\ell,i}\}\big|.$

\begin{cor}\label{equilibrium-relation}
 Let $\Gamma$ be a distance-biregular graph with $y\in V_\ell$ and $x\in V_{\hat\ell}$, $\ell,\hat\ell=0,1$. Then,  %
 $$q_{\ell,d(x,y)}=q_{\hat\ell,d(x,y)}+(n-1)\left(\dfrac{1}{k_{\ell}}-\dfrac{1}{k_{\hat\ell}}\right).$$
\end{cor}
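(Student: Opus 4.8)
The plan is to derive the identity directly from the symmetry relation~\eqref{equil-sym} for equilibrium measures together with the explicit value of the capacity on each stable set. Take $y\in V_\ell$ and $x\in V_{\hat\ell}$ with $m=d(x,y)$. By Proposition~\ref{eq_distancebiregular} applied with the roles of base point and argument swapped, $\nu^x(y)=q_{\hat\ell,m}$ (since $x\in V_{\hat\ell}$ and $d(x,y)=m$), while $\nu^y(x)=q_{\ell,m}$ (since $y\in V_\ell$ and $d(y,x)=m$). Substituting into~\eqref{equil-sym} gives
\begin{equation*}
q_{\ell,m}-q_{\hat\ell,m}=\nu^{y}(x)-\nu^{x}(y)=\frac{1}{n}\big(\ccap(y)-\ccap(x)\big),
\end{equation*}
so the whole statement reduces to showing $\tfrac1n\big(\ccap(y)-\ccap(x)\big)=(n-1)\big(\tfrac1{k_\ell}-\tfrac1{k_{\hat\ell}}\big)$, i.e. that $\ccap$ is constant on each stable set and takes the value $n(n-1)/k_\ell$ on $V_\ell$.

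The second step is therefore to compute $\ccap(y)$ for $y\in V_\ell$. Using $\ccap(y)=\sum_{i=1}^{D_\ell}m_{\ell,i}q_{\ell,i}$ with $m_{\ell,i}=k_{\ell,i}=|\Gamma_i(y)|$ (which is independent of $y\in V_\ell$ by distance-biregularity, hence $\ccap$ is indeed constant on $V_\ell$), one plugs in the telescoping expression $q_{\ell,i}=\sum_{j=0}^{i-1}(n-B_{\ell,j})/(k_{\ell,j}b_{\ell,j})$ from Proposition~\ref{eq_distancebiregular} and rearranges the double sum. Swapping the order of summation, the coefficient of the term $(n-B_{\ell,j})/(k_{\ell,j}b_{\ell,j})$ becomes $\sum_{i=j+1}^{D_\ell}k_{\ell,i}=n-B_{\ell,j}$, giving
\begin{equation*}
\ccap(y)=\sum_{j=0}^{D_\ell-1}\frac{(n-B_{\ell,j})^2}{k_{\ell,j}b_{\ell,j}}.
\end{equation*}
This closed form does not obviously equal $n(n-1)/k_\ell$ term-by-term, so an alternative and cleaner route is to avoid evaluating $\ccap$ in closed form altogether: instead compare $\ccap$ on the two sides using a handle on a single adjacent vertex. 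Concretely, fix an edge $\{u,w\}$ with $u\in V_\ell$, $w\in V_{\hat\ell}$; then $\nu^u(w)=q_{\ell,1}=(n-1)/k_\ell$ and $\nu^w(u)=q_{\hat\ell,1}=(n-1)/k_{\hat\ell}$ by the last display of Proposition~\ref{eq_distancebiregular}, and~\eqref{equil-sym} immediately yields $\tfrac1n(\ccap(u)-\ccap(w))=(n-1)(\tfrac1{k_\ell}-\tfrac1{k_{\hat\ell}})$. Since $\ccap$ is constant on each stable set, $\ccap(y)=\ccap(u)$ and $\ccap(x)=\ccap(w)$, and the claimed identity follows.

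The main obstacle, such as it is, is the bookkeeping needed to justify that $m_{\ell,i}=k_{\ell,i}$ for all $i$ — equivalently, that the equilibrium array has length exactly $D_\ell$ and that the value $q_{\ell,i}$ is attained precisely on $\Gamma_i(y)$ — but this is exactly the content of Proposition~\ref{eq_distancebiregular}, so it may be quoted directly. A minor subtlety is the case $\hat\ell=\ell$: then $m=d(x,y)$ is even, $q_{\ell,m}=q_{\hat\ell,m}$, and the right-hand side $(n-1)(1/k_\ell-1/k_{\hat\ell})$ vanishes, so the identity holds trivially; it is worth noting this so the statement is seen to be consistent for all pairs $x,y$. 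I would present the short edge-comparison argument as the proof, remarking that constancy of $\ccap$ on stable sets is what makes it work.
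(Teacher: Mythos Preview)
Your final argument—the edge-comparison via~\eqref{equil-sym} together with constancy of $\ccap$ on each stable set—is exactly the paper's proof, so the proposal is correct and follows the same route. One small slip to clean up: the claim that the identity is equivalent to ``$\ccap$ takes the value $n(n-1)/k_\ell$ on $V_\ell$'' is false (only the \emph{difference} of capacities is determined, and indeed $\ccap(y)=\sum_j (n-B_{\ell,j-1})^2/(k_{\ell,j}c_{\ell,j})\neq n(n-1)/k_\ell$ in general), but you correctly abandon that line anyway, so it does not affect the argument.
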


\begin{proof}
 From \eqref{equil-sym}  we know that 
$\nu^y(x)-\nu^x(y)=\frac{1}{n}\big(\ccap(y)-\ccap(x)\big)$. On the other hand, $\ccap(y)=\ccap(z)$ for any $z\in V_{\ell}$ and $\ccap(x)=\ccap(w)$ for any $w\in V_{\hat\ell}.$ So, for $\ell\not=\hat\ell$, we can choose $z\in V_{\ell}$ and $w\in V_{\hat\ell}$ such that $d(z,w)=1$, then 
$$\frac{1}{n}\big(\ccap(y)-\ccap(x)\big)=\frac{1}{n}\big(\ccap(z)-\ccap(w)\big)=q_{\ell,1}-q_{\hat\ell,1}=(n-1)\left(\dfrac{1}{k_{\ell}}-\dfrac{1}{k_{\hat\ell}}\right),$$
which implies that 
$$q_{\ell,d(x,y)}=q_{\hat\ell,d(x,y)}+(n-1)\left(\dfrac{1}{k_{\ell}}-\dfrac{1}{k_{\hat\ell}}\right).$$
If $\ell=\hat\ell$, the result trivially holds.
\end{proof}

As an straightforward application of Proposition \ref{eq_distancebiregular} we can find the intersection array of a distance-biregular graph in terms of the equilibrium arrays, analogously as it was done in \cite[Proposition 4.5]{BCEM2000shortest} for distance-regular graphs.

\begin{proposition}\label{prop:arrayintermspotential}
  Let $\Gamma$ be a distance-biregular graph with equilibrium arrays $q_{\ell,i}$ for $\ell=0,1$ and $i=0,\ldots,D_\ell$. Then, for any $i=0,\ldots,D_{\ell}-1$, it holds
\begin{align*}
   &k_{\ell,i}=m_{\ell,i}, \\
   &b_{\ell,i}=\frac{1}{m_{\ell,i}(q_{\ell,i+1}-q_{\ell,i})}\sum_{j=i+1}^{D_{\ell}}m_{\ell,j},\\ &c_{\ell,i+1}=\frac{1}{m_{\ell,i+1}(q_{\ell,i+1}-q_{\ell,i})}\sum_{j=i+1}^{D_{\ell}}m_{\ell,j}.  
\end{align*}

\end{proposition}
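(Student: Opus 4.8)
The plan is to obtain all three identities directly from Proposition~\ref{eq_distancebiregular}, using only the relations already derived there together with Lemma~\ref{propertiess}$(ii)$; no new idea is needed.

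First I would establish $k_{\ell,i}=m_{\ell,i}$. Fix $x\in V_\ell$. By Proposition~\ref{eq_distancebiregular}, $\nu^{x}(y)=q_{\ell,i}$ if and only if $d(x,y)=i$, so $\{y\in V:\nu^{x}(y)=q_{\ell,i}\}=\Gamma_i(x)$, whose cardinality is $k_{\ell,i}$ by definition; hence $m_{\ell,i}=k_{\ell,i}$. As an immediate consequence, for every $i=0,\ldots,D_\ell-1$,
\[
n-B_{\ell,i}=\sum_{j=i+1}^{D_\ell}k_{\ell,j}=\sum_{j=i+1}^{D_\ell}m_{\ell,j}.
\]

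Next, for $b_{\ell,i}$, I would recall the intermediate identity proved inside Proposition~\ref{eq_distancebiregular}, namely $q_{\ell,i+1}-q_{\ell,i}=\dfrac{n-B_{\ell,i}}{k_{\ell,i}b_{\ell,i}}$ for $i=0,\ldots,D_\ell-1$. Since $B_{\ell,i}<n$ and $k_{\ell,i},b_{\ell,i}>0$ the left-hand side is positive, so we may solve for $b_{\ell,i}$; substituting $k_{\ell,i}=m_{\ell,i}$ and the displayed value of $n-B_{\ell,i}$ yields exactly the claimed formula. For $c_{\ell,i+1}$ there are two equivalent routes: either read off from the second expression for $q_{\ell,m}$ in Proposition~\ref{eq_distancebiregular} that $q_{\ell,i+1}-q_{\ell,i}=\dfrac{n-B_{\ell,i}}{k_{\ell,i+1}c_{\ell,i+1}}$ and argue as above, or use $k_{\ell,i}b_{\ell,i}=k_{\ell,i+1}c_{\ell,i+1}$ from Lemma~\ref{propertiess}$(ii)$ to get $c_{\ell,i+1}=\dfrac{m_{\ell,i}}{m_{\ell,i+1}}\,b_{\ell,i}$ and plug in the formula for $b_{\ell,i}$; either way the factor $m_{\ell,i}$ cancels and the stated expression appears.

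The argument is essentially bookkeeping, so I do not expect any real obstacle. The only points that deserve an explicit word are that $q_{\ell,i+1}>q_{\ell,i}$ guarantees the denominators are nonzero, that $m_{\ell,i+1}=k_{\ell,i+1}\ge 1$ for $i+1\le D_\ell$ so that the division in the $c$-formula is legitimate, and the boundary conventions $k_{\ell,0}=m_{\ell,0}=1$ and $B_{\ell,D_\ell}=n$, which make $i=0,\ldots,D_\ell-1$ the natural index range.
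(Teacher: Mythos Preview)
Your proposal is correct and follows exactly the approach the paper indicates: the paper does not give an explicit proof but states the result as ``a straightforward application of Proposition~\ref{eq_distancebiregular},'' and your argument---identifying $m_{\ell,i}$ with $k_{\ell,i}$ via the level sets of $\nu^x$ and then inverting the telescoping identity $q_{\ell,i+1}-q_{\ell,i}=(n-B_{\ell,i})/(k_{\ell,i}b_{\ell,i})$---is precisely that application. The care you take with positivity of the differences and the use of Lemma~\ref{propertiess}$(ii)$ for $c_{\ell,i+1}$ are exactly what is needed.
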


The computation of the equilibrium measure is usually done using linear programming \cite{BCEM2000shortest}. In this regard, Proposition \ref{prop:arrayintermspotential} provides a tool to calculate the  intersection arrays of a distance-biregular graph solving one linear system.


Another application of the equilibrium measure concerns the estimation of the effective resistance
of a resistive electrical network.  As a consequence of Proposition \ref{eq_distancebiregular}, we show the the effective resistance of distance-biregular graphs.

\begin{cor}
 Let $\Gamma$ be a distance-biregular graph with $y\in V_\ell$ and $x\in V_{\hat\ell}$, $\ell,\hat\ell=0,1$. Then, the effective resistance bewteen $x,y$  is
 $$R(x,y)=\frac{2}{n}q_{\ell,d(x,y)}+\frac{(n-1)}{n}\left(\dfrac{1}{k_{\hat\ell}}-\dfrac{1}{k_{\ell}}\right).$$
\end{cor}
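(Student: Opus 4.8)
The plan is to derive the effective resistance formula directly from the standard identity relating effective resistance to the group inverse of the Laplacian, namely $R(x,y)=\LL^\#(x,x)+\LL^\#(y,y)-2\LL^\#(x,y)$, and then rewrite everything in terms of equilibrium measures using the relations \eqref{equil-green} and \eqref{equil-sym} already recorded in the Preliminaries. Concretely, from \eqref{equil-green} we have $\nu^y(x)=n\big(\LL^\#(y,y)-\LL^\#(x,y)\big)$ and, symmetrically, $\nu^x(y)=n\big(\LL^\#(x,x)-\LL^\#(x,y)\big)$; adding these two gives $n\,R(x,y)=\nu^y(x)+\nu^x(y)$. So the heart of the computation is to express $\nu^y(x)+\nu^x(y)$ in terms of a single equilibrium array value.

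Next I would invoke Proposition \ref{eq_distancebiregular}: since $x\in V_{\hat\ell}$ and $y\in V_\ell$ with $d(x,y)=d(y,x)$, we get $\nu^x(y)=q_{\hat\ell,d(x,y)}$ and $\nu^y(x)=q_{\ell,d(x,y)}$. Hence $n\,R(x,y)=q_{\ell,d(x,y)}+q_{\hat\ell,d(x,y)}$. To collapse this to the claimed form, I would use Corollary \ref{equilibrium-relation}, which states $q_{\ell,d(x,y)}=q_{\hat\ell,d(x,y)}+(n-1)\big(\tfrac{1}{k_\ell}-\tfrac{1}{k_{\hat\ell}}\big)$. Substituting $q_{\hat\ell,d(x,y)}=q_{\ell,d(x,y)}-(n-1)\big(\tfrac{1}{k_\ell}-\tfrac{1}{k_{\hat\ell}}\big)$ into the sum yields
$$n\,R(x,y)=2q_{\ell,d(x,y)}-(n-1)\left(\frac{1}{k_\ell}-\frac{1}{k_{\hat\ell}}\right)=2q_{\ell,d(x,y)}+(n-1)\left(\frac{1}{k_{\hat\ell}}-\frac{1}{k_\ell}\right),$$
and dividing by $n$ gives exactly the stated expression.

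There is essentially no obstacle here; the result is a short chain of substitutions, and the only point requiring a sentence of justification is the identity $R(x,y)=\LL^\#(x,x)+\LL^\#(y,y)-2\LL^\#(x,y)$, which is a classical fact about the Moore--Penrose/group inverse of the Laplacian (and which can itself be re-derived from \eqref{equil-green} and the symmetry \eqref{equil-sym} if one prefers to stay self-contained, using $\nu^y(x)+\nu^x(y)=n\big(\LL^\#(x,x)+\LL^\#(y,y)\big)-2n\LL^\#(x,y)$ directly). One should also note the degenerate case $\ell=\hat\ell$ (i.e. $x,y$ in the same stable set, so $d(x,y)$ is even), where the correction term vanishes and the formula reduces to $R(x,y)=\tfrac{2}{n}q_{\ell,d(x,y)}$, consistent with Corollary \ref{equilibrium-relation}. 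So the proof is genuinely a two-line verification, and I would present it as such.
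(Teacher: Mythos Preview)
Your proof is correct and follows essentially the same route as the paper: the paper invokes the identity $R(x,y)=\dfrac{\nu^{x}(y)+\nu^{y}(x)}{n}$ (citing \cite{BCEM03}) and then applies Corollary \ref{equilibrium-relation}, which is exactly what you do after first deriving that identity from \eqref{equil-green}. The only cosmetic difference is that you justify $nR(x,y)=\nu^y(x)+\nu^x(y)$ via $\LL^\#$ rather than quoting it, which is fine.
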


\begin{proof}
 The result follows from Corollary \ref{equilibrium-relation} taking into account that  
$R(x,y)=\dfrac{\nu^{x}(y)+\nu^{y}(x)}{n}$, see \cite{BCEM03}. 
\end{proof}

The next results shows the group inverse of the Laplacian of a distance-biregular graphs in terms of the intersection arrays. 

\begin{thm}\label{greenfunctiondistancebiregular}
Let $\Gamma$ be a distance-biregular graph. Then, for each $y\in V_\ell$ with $\ell=0,1$, the group inverse of $\LL$ is given by


$$\LL^\#(x,y)=
\displaystyle\displaystyle\dfrac{1}{n}\sum\limits_{j=d(x,y)+1}^{D_\ell} \frac{n-B_{\ell,j-1}}{k_{\ell,j}c_{\ell,j}} -\displaystyle\dfrac{1}{n^2}\sum\limits_{j=1}^{D_\ell}
\frac{B_{\ell,j-1}(n-B_{\ell,j-1})}{k_{\ell,j}c_{\ell,j}} .$$
\end{thm}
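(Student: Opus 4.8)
The plan is to combine the general identity \eqref{green-f}, namely $\LL^\#(x,y)=\frac{1}{n^2}\big(\ccap(y)-n\,\nu^y(x)\big)$, with the closed form of the equilibrium measure provided by Proposition~\ref{eq_distancebiregular}. Fix $y\in V_\ell$. Applying that proposition with $y$ in the role of the base vertex, and using that $d$ is symmetric, gives $\nu^y(x)=q_{\ell,d(x,y)}=\sum_{j=1}^{d(x,y)}\frac{n-B_{\ell,j-1}}{k_{\ell,j}c_{\ell,j}}$. Thus the term $-\tfrac1n\,\nu^y(x)$ already accounts for the first (truncated) sum in the statement, and the remaining work is to show that $\frac{1}{n^2}\ccap(y)$ contributes the rest.

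First I would evaluate $\ccap(y)=\sum_{x\in V}\nu^y(x)$. Proposition~\ref{eq_distancebiregular} says that $\nu^y$ is constant, equal to $q_{\ell,m}$, on the set $\Gamma_m(y)$, which has $k_{\ell,m}$ elements (equivalently $m_{\ell,m}=k_{\ell,m}$); hence $\ccap(y)=\sum_{m=1}^{D_\ell}k_{\ell,m}q_{\ell,m}$, the $m=0$ term dropping out because $q_{\ell,0}=0$. Substituting the expression $q_{\ell,m}=\sum_{j=1}^{m}\frac{n-B_{\ell,j-1}}{k_{\ell,j}c_{\ell,j}}$ and interchanging the order of summation,
$$\ccap(y)=\sum_{j=1}^{D_\ell}\frac{n-B_{\ell,j-1}}{k_{\ell,j}c_{\ell,j}}\sum_{m=j}^{D_\ell}k_{\ell,m}=\sum_{j=1}^{D_\ell}\frac{(n-B_{\ell,j-1})^2}{k_{\ell,j}c_{\ell,j}},$$
where the inner sum equals $B_{\ell,D_\ell}-B_{\ell,j-1}=n-B_{\ell,j-1}$.

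Finally I would split the square in the numerator as $(n-B_{\ell,j-1})^2=n(n-B_{\ell,j-1})-B_{\ell,j-1}(n-B_{\ell,j-1})$, which yields
$$\frac{1}{n^2}\ccap(y)=\frac{1}{n}\sum_{j=1}^{D_\ell}\frac{n-B_{\ell,j-1}}{k_{\ell,j}c_{\ell,j}}-\frac{1}{n^2}\sum_{j=1}^{D_\ell}\frac{B_{\ell,j-1}(n-B_{\ell,j-1})}{k_{\ell,j}c_{\ell,j}}.$$
Subtracting $\frac{1}{n}\nu^y(x)=\frac{1}{n}\sum_{j=1}^{d(x,y)}\frac{n-B_{\ell,j-1}}{k_{\ell,j}c_{\ell,j}}$ shortens the range of the first sum to $\{d(x,y)+1,\dots,D_\ell\}$, and the result is exactly the claimed identity. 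The only points needing a word of care are the boundary conventions ($q_{\ell,0}=0$, $c_{\ell,0}=0$) and the fact that the first sum is empty when $d(x,y)=D_\ell$, which is admissible since $D_\ell=\max\{d(x,y):y\in V,\ x\in V_\ell\}$ forces $d(x,y)\le D_\ell$ for every $x$.

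I do not expect a genuine obstacle: once Proposition~\ref{eq_distancebiregular} is available, the argument is essentially bookkeeping, the only delicate step being the interchange of the two summations together with the index shift hidden in $B_{\ell,j-1}$. A convenient internal check is to set $d(x,y)=0$: the first sum then runs over $\{1,\dots,D_\ell\}$, recombines with the second, and returns $\ccap(y)/n^2$, consistent with the known relation $\ccap(y)=n^2\LL^\#(y,y)$.
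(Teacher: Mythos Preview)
Your proposal is correct and follows essentially the same approach as the paper: start from \eqref{green-f}, compute $\ccap(y)$ by summing $k_{\ell,m}q_{\ell,m}$ and interchanging the two summations to obtain $\sum_j (n-B_{\ell,j-1})^2/(k_{\ell,j}c_{\ell,j})$, and then split $(n-B_{\ell,j-1})^2=n(n-B_{\ell,j-1})-B_{\ell,j-1}(n-B_{\ell,j-1})$ to absorb the subtraction of $\tfrac1n\nu^y(x)$ into the first sum. The only cosmetic difference is that you spell out the splitting of the square explicitly, whereas the paper presents the resulting two-line rewriting directly.
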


\begin{proof}

From 
\eqref{green-f}, we know that $\LL^\#(x,y)=\frac{1}{n^2}(\ccap(y)-n\nu^y(x)).$ Take $y\in V_\ell$ with $\ell=0,1$. Now, using Proposition \ref{eq_distancebiregular}, 
\begin{align*}
    \ccap(y)&=\displaystyle\sum\limits_{x\in V}\nu^y(x)=\displaystyle\sum\limits_{m=0}^{D_{\ell}} k_{\ell,m}q_{\ell,m}=\sum\limits_{m=0}^{D_{\ell}} k_{\ell,m}\sum\limits_{j=1}^{m}\dfrac{n-B_{\ell,j-1}}{k_{\ell,j}c_{\ell,j}}\\
    &= \displaystyle\sum\limits_{j=1}^{D_{\ell}}\sum\limits_{m=j}^{D_\ell}   \frac{k_{\ell,m}(n-B_{\ell,j-1})}{k_{\ell,j}c_{\ell,j}}= \displaystyle\sum\limits_{j=1}^{D_{\ell}} \frac{(n-B_{\ell,j-1})^2}{k_{\ell,j}c_{\ell,j}}.
\end{align*}
Therefore,
\begin{align*}
    \LL^\#(x,y)=&\displaystyle\sum\limits_{j=1}^{D_{\ell}} \frac{(n-B_{\ell,j-1})^2}{n^2k_{\ell,j}c_{\ell,j}}  - \sum\limits_{j=1}^{d(x,y)}   \frac{n-B_{\ell,j-1}}{nk_{\ell,j}c_{\ell,j}}\\[3ex]
    =&\displaystyle\sum\limits_{j=d(x,y)+1}^{D_\ell} \frac{n-B_{\ell,j-1}}{nk_{\ell,j}c_{\ell,j}}-
\sum\limits_{j=1}^{D_\ell}\frac{B_{\ell,j-1}(n-B_{\ell,j-1})}{n^2k_{\ell,j}c_{\ell,j}}. 
\end{align*}

\end{proof}

\begin{remark}
Observe that, since the  intersection numbers of a distance-biregular graph are related, from Theorem \ref{greenfunctiondistancebiregular}, the  expression of the group inverse is equivalent to
$$\LL^\#(x,y)=\displaystyle\dfrac{1}{n}\sum\limits_{j=d(x,y)}^{D_\ell-1}\frac{n-B_{\ell,j}}{k_{\ell,j}b_{\ell,j}} -\displaystyle\dfrac{1}{n^2}
\sum\limits_{j=0}^{D_{\ell}-1}\frac{B_{\ell,j}(n-B_{\ell,j})}{k_{\ell,j}b_{\ell,j}}.$$
\end{remark}

\begin{example}\label{star-and-all}
As an application of Theorem \ref{greenfunctiondistancebiregular} we obtain the group inverse of the Laplacian of a complete bipartite graph using its parameters: 
$$\left.\begin{array}{ll}
D_0=2, k_0, c_{0,1}=1, c_{0,2}=k_0, b_{0,0}=k_0, b_{0,1}=k_1-1, 
\\[1ex]
D_1=2, k_1, c_{1,1}=1, c_{1,2}=k_1, b_{1,0}=k_1, b_{1,1}=k_0-1,
%
\end{array}\right\}\Longrightarrow n=k_0+k_1.$$
Take $x,\hat x\in V_0$, $\hat x\not=x$ and $y,\hat y\in V_1$, $\hat y\not=y$. Then,
$$\begin{array}{rl}
{\sf L}^\#(x,x)=&\hspace{-.25cm}\dfrac{1}{n^2}\bigg[  \dfrac{(k_0+k_1-1)^2}{k_0} + \dfrac{k_1-1}{k_0} \bigg]=\dfrac{(n-1)^2+n-k_0-1}{n^2k_0}=\dfrac{n^2-n-k_0}{k_0n^2},\\[3ex]
{\sf L}^\#(y,y)=&\hspace{-.25cm}\dfrac{1}{n^2}\bigg[  \dfrac{(k_0+k_1-1)^2}{k_1}  + \dfrac{k_0-1}{k_1} \bigg]=\dfrac{n^2-n-k_1}{k_1n^2},\\[3ex]
{\sf L}^\#(y,x)=&\hspace{-.25cm}\dfrac{n^2-n-k_0}{k_0n^2}-\dfrac{n(n-1)}{k_0n^2}=-\dfrac{1}{n^2}={\sf L}^\#(x,y),\\[3ex]
{\sf L}^\#(\hat x,x)=&\hspace{-.25cm}\dfrac{n^2-n-k_0}{k_0n^2}-\dfrac{n(n-1)}{k_0n^2}-\dfrac{n}{k_0n^2}=-\dfrac{(n+k_0)}{k_0n^2},\\[3ex]
{\sf L}^\#(\hat y,y)=&\hspace{-.25cm}-\dfrac{(n+k_1)}{k_1n^2}.
 \end{array}$$

Observe that for a complete bipartite graph, it holds that ${\sf L}^\#$ is always an $M$-matrix. The above expression is valid when $D_0=D_1=1$, and $D_0=1$ and $D_1=2$; that is, for the star graph. 

\end{example}

\section{Distance-biregular graphs with the $M$-property}

In this section, we answer Question \ref{que:Mmatrix} for distance-biregular graphs, completing, together with the known results for distance-regular graphs \cite{BCEM2012}, the characterization of when the group  inverse of the combinatorial Laplacian matrix of a distance-regularised  graph is an $M$-matrix.

\begin{proposition}\label{Mpropertydistancebiregular}
Let $\Gamma$ be a distance-biregular graph. Then, $\Gamma$ has the $M$-property if and only if,  it holds
\begin{align*}
  & 
  \displaystyle\sum\limits_{j=1}^{D_{0}-1} \frac{1}{k_{0,j}b_{0,j}} \Big (\sum\limits_{i=j+1}^{D_{0}}k_{0,i}\Big)^2 \leq  \frac{(n-1)}{k_0}.
\end{align*}
\end{proposition}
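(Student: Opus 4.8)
The plan is to apply the characterization of the $M$-property given in \eqref{equil-m}, specialized to the structure of a distance-biregular graph. Fix $y\in V_\ell$ with $\ell=0,1$. By Proposition \ref{eq_distancebiregular} the equilibrium measure $\nu^y$ takes exactly the values $q_{\ell,0}<q_{\ell,1}<\cdots<q_{\ell,D_\ell}$, attained respectively on $\Gamma_0(y),\Gamma_1(y),\ldots,\Gamma_{D_\ell}(y)$, so that in the notation of the preliminaries $\ell(y)=D_\ell$, $q_i(y)=q_{\ell,i}$ and $m_i(y)=k_{\ell,i}$. Hence the $M$-property for the vertex $y$ reads, via \eqref{equil-m},
\begin{equation*}
\sum_{i=2}^{D_\ell} k_{\ell,i}\,\big(q_{\ell,i}-q_{\ell,1}\big)\le q_{\ell,1}.
\end{equation*}
Since this condition depends only on $\ell$, the graph $\Gamma$ has the $M$-property iff the above inequality holds for both $\ell=0$ and $\ell=1$.

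Next I would rewrite $q_{\ell,i}-q_{\ell,1}$ using the telescoping formula from Proposition \ref{eq_distancebiregular}, namely $q_{\ell,i+1}-q_{\ell,i}=\dfrac{n-B_{\ell,i}}{k_{\ell,i}b_{\ell,i}}=\dfrac{1}{k_{\ell,i}b_{\ell,i}}\sum_{r=i+1}^{D_\ell}k_{\ell,r}$, so that $q_{\ell,i}-q_{\ell,1}=\sum_{j=1}^{i-1}\dfrac{1}{k_{\ell,j}b_{\ell,j}}\sum_{r=j+1}^{D_\ell}k_{\ell,r}$ and $q_{\ell,1}=\dfrac{n-1}{k_\ell}=\dfrac{n-1}{k_{\ell,1}b_{\ell,1}}\cdot\dfrac{k_{\ell,1}b_{\ell,1}}{k_\ell}$; more usefully $q_{\ell,1}=\dfrac{1}{k_{\ell,0}b_{\ell,0}}\sum_{r=1}^{D_\ell}k_{\ell,r}=\dfrac{n-1}{k_\ell}$. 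Substituting into the left-hand side and interchanging the order of summation over $i$ and $j$ (the index $j$ ranges over $1\le j\le i-1\le D_\ell-1$, i.e. $j$ from $1$ to $D_\ell-1$ and then $i$ from $j+1$ to $D_\ell$), the coefficient of $\dfrac{1}{k_{\ell,j}b_{\ell,j}}\sum_{r=j+1}^{D_\ell}k_{\ell,r}$ becomes $\sum_{i=j+1}^{D_\ell}k_{\ell,i}$, which is the same sum again. This collapses the left-hand side to
\begin{equation*}
\sum_{j=1}^{D_\ell-1}\frac{1}{k_{\ell,j}b_{\ell,j}}\Big(\sum_{i=j+1}^{D_\ell}k_{\ell,i}\Big)^2,
\end{equation*}
and the inequality for the vertex $y\in V_\ell$ becomes exactly the displayed inequality with $\ell$ in place of $0$.

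It then remains to show that the inequality for $\ell=0$ already implies the one for $\ell=1$ (or vice versa), so that a single condition suffices; the asymmetry $1\le D_0\le D_1$ and $k_0\ge k_1$ (Lemma \ref{lem:DBRGdiametercases}) suggests the $\ell=0$ inequality is the binding one. I expect this comparison step to be the main obstacle: one must relate $\sum_{j=1}^{D_0-1}\frac{1}{k_{0,j}b_{0,j}}(\sum_{i>j}k_{0,i})^2/\tfrac{n-1}{k_0}$ to the analogous quantity with subscript $1$, using Lemma \ref{propertiess}(ii)--(v) (the identities $k_0k_{1,2i+1}=k_1k_{0,2i+1}$, $b_{0,2i-1}b_{0,2i}=b_{1,2i-1}b_{1,2i}$, $c_{0,2i}c_{0,2i+1}=c_{1,2i}c_{1,2i+1}$, and the monotonicity $b_{\ell,i}\ge b_{\bar\ell,i+1}$), perhaps grouping the sum in pairs of consecutive indices $2m-1,2m$ and bounding each block. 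I would first handle the case $D_0=D_1$ with $k_0>k_1$ (even diameter) and the bipartite distance-regular case $k_0=k_1$ (where the two conditions coincide), and then the case $D_1=D_0+1$ with $D_0$ odd, where the extra tail term $k_{1,D_1}$ in the $\ell=1$ sum must be controlled. Once this reduction is done, the statement follows.
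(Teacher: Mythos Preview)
Your derivation through the first two steps is correct and coincides with the paper's argument: starting from \eqref{equil-m} (equivalently \eqref{measure-m}) and inserting the equilibrium data of Proposition \ref{eq_distancebiregular}, the telescoping together with the interchange of summation yields exactly the displayed inequality, for each $\ell\in\{0,1\}$.

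The gap is in your reduction step. You treat the equivalence of the $\ell=0$ and $\ell=1$ inequalities as ``the main obstacle'' and propose an (unfinished) term-by-term combinatorial comparison via Lemma \ref{propertiess}. This is unnecessary: the two inequalities are automatically the \emph{same} condition. For any edge $\{x,y\}$ with $y\in V_0$ and $x\in V_1$, the symmetry $\LL^\#(x,y)=\LL^\#(y,x)$ means that the criterion $\LL^\#(x,y)\le 0$ can be read from either side. Concretely, by \eqref{green-f} the $\ell=0$ inequality is $\ccap(y)\le n\,q_{0,1}$ and the $\ell=1$ inequality is $\ccap(x)\le n\,q_{1,1}$; Corollary \ref{equilibrium-relation} (or directly \eqref{equil-sym}) gives $\ccap(y)-n\,q_{0,1}=\ccap(x)-n\,q_{1,1}$, so one holds iff the other does. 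The paper simply writes the condition from the side $y\in V_0$ and records the $\ell=1$ version as an equivalent remark; there is no separate implication to prove.
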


\begin{proof}
We know that a graph $\Gamma$ satisfies the $M$-property if and only if the entries of ${\sf L}^\#(x,y)\leq 0$ for all $x\sim y$. The result follows from using that 
$$k_{0,0}=1,\,\,
    b_{0,0}=k_0,\,\,
 \mbox{ and }\,\, \sum\limits_{i=1}^{D_{0}}k_{0,i}=n-1.
$$

\end{proof}

\begin{remark}
The condition from Proposition \ref{Mpropertydistancebiregular} is equivalent to
\begin{align}\label{thm:Mpropertyconb}
  & 
  \displaystyle\sum\limits_{j=1}^{D_{1}-1} \frac{1}{k_{1,j}b_{1,j}} \Big (\sum\limits_{i=j+1}^{D_{1}}k_{1,i}\Big)^2 \leq  \frac{(n-1)}{k_1}.
\end{align}
\end{remark}

Using Proposition \ref{Mpropertydistancebiregular} we can also obtain the following necessary condition for a distance-biregular graph having the $M$-property.

\begin{cor}
\label{penrose-distance-regular:necessary} If $\Gamma$ is a distance-biregular graph with  the $M$-property and $D_0 \ge 2$, then
$$ n< 2k_1+k_{0}.$$
 \end{cor}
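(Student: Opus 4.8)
The plan is to extract the weakest possible necessary condition from Proposition \ref{Mpropertydistancebiregular} by retaining only the first term ($j=1$) of the sum on the left-hand side and discarding the rest, which are all non-negative. When $D_0 \ge 2$ the sum $\sum_{j=1}^{D_0-1}$ is non-empty, so this is legitimate. The $j=1$ term is
$$
\frac{1}{k_{0,1}b_{0,1}}\Big(\sum_{i=2}^{D_0}k_{0,i}\Big)^2 = \frac{1}{k_0(k_1-1)}\big(n-1-k_0\big)^2,
$$
using $k_{0,1}=k_0$, $b_{0,1}=k_1-1$, and $\sum_{i=1}^{D_0}k_{0,i}=n-1$ so that $\sum_{i=2}^{D_0}k_{0,i}=n-1-k_0$. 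Thus the $M$-property forces
$$
\frac{(n-1-k_0)^2}{k_0(k_1-1)} \le \frac{n-1}{k_0},
$$
i.e. $(n-1-k_0)^2 \le (n-1)(k_1-1)$.

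Next I would manipulate this inequality into the stated bound. Expanding, $(n-1-k_0)^2 = (n-1)^2 - 2k_0(n-1) + k_0^2$, so the inequality reads $(n-1)^2 - 2k_0(n-1) + k_0^2 \le (n-1)(k_1-1)$, hence $(n-1)^2 \le (n-1)(k_1-1+2k_0) - k_0^2 < (n-1)(k_1-1+2k_0)$. Dividing by $n-1>0$ gives $n-1 < k_1 - 1 + 2k_0$, that is $n < 2k_0 + k_1$. I would then observe that the symmetric form \eqref{thm:Mpropertyconb} of the condition gives, by the identical argument with the roles of the two partite classes swapped, $n < 2k_1 + k_0$; since $k_0 \ge k_1$ (Lemma \ref{lem:DBRGdiametercases}), the bound $n < 2k_1 + k_0$ is the stronger (smaller) of the two, which is why it is the one stated.

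The argument is essentially a one-step truncation followed by elementary algebra, so there is no real obstacle; the only point requiring a moment of care is making sure the truncated sum is non-empty, which is exactly the hypothesis $D_0 \ge 2$ (equivalently $D_0 - 1 \ge 1$), and keeping track of which partite class yields the sharper inequality via the remark following Proposition \ref{Mpropertydistancebiregular}.
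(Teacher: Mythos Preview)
Your proof is correct and follows essentially the same approach as the paper: truncate the sum in Proposition~\ref{Mpropertydistancebiregular} (or its equivalent form~\eqref{thm:Mpropertyconb}) to the single term $j=1$ and do elementary algebra. The only cosmetic difference is that the paper works directly with the $V_1$-side inequality~\eqref{thm:Mpropertyconb} to obtain $n<2k_1+k_0$ in one pass, whereas you first derive $n<2k_0+k_1$ from the $V_0$-side and then invoke~\eqref{thm:Mpropertyconb} to get the sharper bound; both routes are valid and equally short.
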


\begin{proof}
\, Since $D_1\ge D_0\ge 2$, from Proposition
\ref{Mpropertydistancebiregular} we obtain that

\begin{align*}
\frac{1}{k_{1,1}b_{1,1}} \Big (\sum\limits_{i=2}^{D_{1}}k_{1,i}\Big)^2  \leq \displaystyle\sum\limits_{j=1}^{D_{1}-1} \frac{1}{k_{1,j}b_{1,j}} \Big (\sum\limits_{i=j+1}^{D_{1}}k_{1,i}\Big)^2 \leq  \frac{(n-1)}{k_1}.
\end{align*}

Now, observing that
$$\dfrac{1}{k_{1,1}b_{1,1}} \Big (\sum\limits_{i=2}^{D_{1}}k_{1,i}\Big)^2=\dfrac{(n-k_1-1)^2}{k_{1}b_{1,1}}$$ we get

\begin{align*}
    &(n-k_1-1)^2 \leq (n-1)b_{1,1}=(n-1)(k_{0}-1) \Longleftrightarrow\\
    & (n-1)^2 -2k_1(n-1)+k_1^2\leq (n-1)(k_{0}-1) \Longleftrightarrow\\
    &n-2k_{1}+\dfrac{k_{1}^2}{n-1}-k_{0}\le 0,
\end{align*}

and since $\frac{k_1^2}{n-1} > 0$, the result follows.
\end{proof}

Note that the inequality $n<2k_1+k_{0}$ turns out to be a strong restriction for a
distance-biregular graph to have the $M$-property. Observe that such condition implies that the distance-biregular graph needs to be quite dense.

%
%
%


The following result generalizes the above observation  by showing that only distance-biregular graphs with small $D_\ell$ can satisfy the $M$-property. A related result appeared in \cite[Proposition 5]{BCEM2012}, where it was shown that the diameter of a distance-regular graphs with the $M$-property must be at most $3$.

\begin{proposition}\label{DBRGMpropertydiameter}
If $\Gamma$ is a distance-biregular graph with the $M$-property, then $D_{1}\le 4$ and $D_{0}\le 3$.
\end{proposition}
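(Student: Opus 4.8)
The plan is to reduce the statement to proving $D_0\le 3$, because Lemma~\ref{propertiess}$(i)$ gives $D_1\le D_0+1$, so once $D_0\le 3$ is known we immediately get $D_1\le 4$. I would therefore argue by contradiction: assume $D_0\ge 4$ (which by Lemma~\ref{propertiess}$(i)$ forces $D_1\ge 4$ as well) and derive a violation of the necessary density condition $n<2k_1+k_0$ from Corollary~\ref{penrose-distance-regular:necessary}, which applies since $D_0\ge 2$. The underlying idea is exactly the one flagged before the statement: the $M$-property makes the graph dense, yet a distance-biregular graph with $D_0\ge 4$ is forced to have too many vertices.

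Concretely, I would bound $n$ from below as follows. Since $3\le D_0$ and $4\le D_0$, connectedness together with the constancy of sphere sizes on a stable set gives $k_{0,3}\ge 1$ and $k_{0,4}\ge 1$, so
\[
n=\sum_{i=0}^{D_0}k_{0,i}\ \ge\ 1+k_0+k_{0,2}+k_{0,3}+k_{0,4}\ \ge\ 3+k_0+k_{0,2}.
\]
Next I would bound $k_{0,2}$ below. Applying Lemma~\ref{propertiess}$(vii)$ with $\ell=0$ and $i=j=2$ — legitimate precisely because $i+j=4\le D_0$ — yields $c_{0,2}\le b_{0,2}$, and since $c_{0,2}+b_{0,2}=k_0$ (as $2$ is even) we get $c_{0,2}\le k_0/2$. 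Then Lemma~\ref{propertiess}$(ii)$ with $b_{0,0}=k_0$, $c_{0,1}=1$, $b_{0,1}=k_1-1$ gives
\[
k_{0,2}=\frac{b_{0,0}\,b_{0,1}}{c_{0,1}\,c_{0,2}}=\frac{k_0(k_1-1)}{c_{0,2}}\ \ge\ 2(k_1-1).
\]
Combining, $n\ge 3+k_0+2(k_1-1)=k_0+2k_1+1>2k_1+k_0$, contradicting Corollary~\ref{penrose-distance-regular:necessary}. Hence $D_0\le 3$, and Lemma~\ref{propertiess}$(i)$ gives $D_1\le D_0+1\le 4$.

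I do not expect a real obstacle here; the one genuinely load-bearing point is locating the right application of Lemma~\ref{propertiess}$(vii)$, since the hypothesis $D_0\ge 4$ is used \emph{only} to make the index condition $i+j\le D_0$ hold, and this is what turns the bound $c_{0,2}\le k_0/2$ (hence $k_{0,2}\ge 2(k_1-1)$) into a vertex count that beats the density bound $n<2k_1+k_0$. Minor checks are that $k_{0,i}\ge 1$ for $i\le D_0$ (immediate from connectedness and constancy of the $k_{0,i}$) and, if one wishes, that $k_1\ge 2$ (otherwise $b_{0,1}=0$ and already $D_0\le 1$); note the argument is uniform and in particular also re-covers the bipartite distance-regular case. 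An alternative, more computational route would feed the same lower bounds on $k_{0,2},k_{0,3},k_{0,4}$ directly into the $M$-property inequality of Proposition~\ref{Mpropertydistancebiregular}, but that produces messier estimates, so I would favour the short route through Corollary~\ref{penrose-distance-regular:necessary}.
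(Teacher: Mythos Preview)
Your proof is correct and follows the same overall strategy as the paper's: assume $D_0\ge 4$, bound $n$ from below via the sphere sizes $k_{0,i}$, and contradict the density bound $n<2k_1+k_0$ of Corollary~\ref{penrose-distance-regular:necessary}. The execution differs in detail. The paper first splits off the case $k_0=k_1$ (bipartite distance-regular, handled by \cite{BCEM2012}) and, assuming $k_0>k_1$, proves $k_{0,2}>k_1$ (via $b_{0,1}\ge c_{1,2}$ from Lemma~\ref{propertiess}$(v)$ together with $c_{1,2}/c_{0,2}>k_1/k_0$) and $k_{0,3}\ge k_0$ (via $b_{0,1}\ge c_{0,3}$ and $b_{0,2}\ge c_{0,2}$ from Lemma~\ref{propertiess}$(vii)$), obtaining $n>2k_0+k_1>2k_1+k_0$. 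You instead extract only the single inequality $c_{0,2}\le b_{0,2}$ from Lemma~\ref{propertiess}$(vii)$, turn it into $k_{0,2}\ge 2(k_1-1)$, and combine with the trivial bounds $k_{0,3},k_{0,4}\ge 1$ to get $n\ge k_0+2k_1+1$. Your route is a bit more economical: it invokes fewer intersection-number inequalities, avoids the case distinction on $k_0$ versus $k_1$, and, as you observe, recovers the bipartite distance-regular case uniformly rather than by citation.
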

\begin{proof}
\, By means of a contradiction, assume $D_0,D_{1}\ge 4$. Then, 
$$  1+k_\ell+k_{\ell,2}+k_{\ell,3}< 1+k_\ell+k_{\ell,2}+k_{\ell,3}+k_{\ell,4}\le n.$$
We can assume that $k_0>k_{1},$ since otherwise $\Gamma$ is a bipartite distance-regular graphs and hence $D_0=D_1\le 3$, see \cite[Proposition 5]{BCEM2012}. Then,  

$$k_{0,2}=k_{0}\dfrac{b_{0,1}}{c_{0,2}}\ge k_{0}\dfrac{c_{1,2}}{c_{0,2}}>k_{0}\dfrac{k_{1}}{k_{0}}=k_{1}.$$
On the other hand, since $b_{0,1}\ge c_{0,3}$ and $b_{0,2}\ge c_{0,2}$, we obtain that 
$$k_{0,3}=k_0 \dfrac{b_{0,1}b_{0,2}}{c_{0,2}c_{0,3}}\ge k_0.$$

Finally, from Corollary \ref{penrose-distance-regular:necessary} it follows that $n+1<1+2k_0+k_{1}\le n$, a contradiction. 
\end{proof}


As an application of Proposition \ref{DBRGMpropertydiameter}, we classify distance-biregular graphs having the $M$-property. We follow the notation from \cite{A1990}.

\medskip
\begin{description}
\item[Case 1:] $D_0=D_1=1$. This case corresponds to a digon  that is a distance-regular and has the $M$-property.

\item[Case 2:] $D_0=1$, $D_1=2$. This case corresponds to star graphs, which are known to have the $M$-property, see \cite{CEM2014, KN98} or Example \ref{star-and-all}.


\medskip
\item[Case 3:] $D_0=D_1=2$. This case corresponds to a bipartite complete graph (see Example \ref{star-and-all}).


\medskip
\item[Case 4:] $D_0=D_1=3$. This case corresponds to a bipartite distance-regular graph \cite[Section 5.1]{A1990}, and thus it was already studied in \cite{BCEM2012}. In this case, the intersection array is $\{k,k-1,k-\mu; 1,\mu,k\}$, where $1\le
\mu\le k-1$ and  $\mu$ divides  $k(k-1)$. They are antipodal iff
$\mu=k-1$. Otherwise, they are the incidence graphs of nontrivial
square $2-\Big(\frac{n}{2},k,\mu\Big)$ designs. Therefore, $k-\mu$
must be a square, see \cite[Th. 1.10.4]{BCN89}.

\begin{proposition}\cite[Proposition 13]{BCEM2012}
  \label{bipartite} A bipartite distance-regular graph with $D=3$ satisfies
  the $M$-property if and only if $$\dfrac{4k}{5}\le \mu\le k-1$$ and these inequalities imply that $k\ge
  5$. In particular, if $1\le\mu< k-1$, then either $\Gamma$ or $\Gamma_3$ has the $M$-property, except when
$  k-1<5\mu<4k,$
in which case none of them has
the $M$-property.
\end{proposition}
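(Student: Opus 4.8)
The plan is to specialise the $M$-property criterion of Proposition \ref{Mpropertydistancebiregular} to the intersection array $\{k,k-1,k-\mu;1,\mu,k\}$ and then analyse the resulting one-variable inequality in $\mu$. Since a bipartite distance-regular graph is distance-biregular with $k_0=k_1=k$ and $D_0=D_1=3$, Lemma \ref{propertiess}$(ii)$ gives $k_{0,1}=k$, $k_{0,2}=\frac{k(k-1)}{\mu}$, $k_{0,3}=\frac{(k-1)(k-\mu)}{\mu}$, together with $b_{0,1}=k-1$, $b_{0,2}=k-\mu$ and $n=B_{0,3}=2+\frac{2k(k-1)}{\mu}$. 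Substituting these into the inequality of Proposition \ref{Mpropertydistancebiregular} (which for $D_0=3$ involves only the terms $j=1,2$), I would simplify its left-hand side to $\frac{(k-1)(4k-3\mu)}{\mu^{2}}$ and its right-hand side $\frac{n-1}{k}$ to $\frac{\mu+2k(k-1)}{k\mu}$; clearing the positive factor $k\mu^{2}$ turns the $M$-property into the single polynomial inequality $k(k-1)(4k-5\mu)\le\mu^{2}$.

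The next step is to show that, for integers $k\ge 2$ and $1\le\mu\le k-1$, this inequality is equivalent to $5\mu\ge 4k$. If $5\mu\ge 4k$ then $4k-5\mu\le 0$ and the inequality is clear. Conversely, if $5\mu<4k$ then $4k-5\mu$ is a positive integer, so $k(k-1)(4k-5\mu)\ge k(k-1)$; on the other hand $\mu<\frac{4k}{5}$ gives $\mu^{2}<\frac{16k^{2}}{25}$, and $\frac{16k^{2}}{25}\le k(k-1)$ exactly when $k\ge 3$, so the inequality fails for $k\ge 3$, while for $k=2$ necessarily $\mu=1$ ($\Gamma$ is then the $6$-cycle), where $k(k-1)(4k-5\mu)=6>1=\mu^{2}$ directly. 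Hence $\Gamma$ has the $M$-property if and only if $\frac{4k}{5}\le\mu$; combined with the trivial bound $\mu\le k-1$ this reads $\frac{4k}{5}\le\mu\le k-1$, a range which is nonempty only when $\frac{4k}{5}\le k-1$, i.e.\ $k\ge 5$.

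For the dichotomy between $\Gamma$ and $\Gamma_3$, I would use that when $1\le\mu<k-1$ the graph $\Gamma$ is the incidence graph of a nontrivial square $2$-$(\frac{n}{2},k,\mu)$ design, so $\Gamma_3$ is the incidence graph of the complementary design and is again a connected bipartite distance-regular graph of diameter $3$, with array $\{k',k'-1,k'-\mu';1,\mu',k'\}$ where $k'=\frac{(k-1)(k-\mu)}{\mu}$ and $\mu'=\frac{(k-\mu)(k-\mu-1)}{\mu}$. Applying the criterion already obtained to $\Gamma_3$, it has the $M$-property if and only if $5\mu'\ge 4k'$; since $5\mu'-4k'=\frac{(k-\mu)\big((k-1)-5\mu\big)}{\mu}$, this is equivalent to $5\mu\le k-1$. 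As $5\mu\ge 4k$ and $5\mu\le k-1$ cannot hold simultaneously (because $4k>k-1$), at most one of $\Gamma,\Gamma_3$ has the $M$-property, and exactly one does unless $k-1<5\mu<4k$, in which case neither does.

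The main obstacle I anticipate is the passage from the polynomial inequality $k(k-1)(4k-5\mu)\le\mu^{2}$ to the clean form $5\mu\ge 4k$: the positive root $\mu_0$ of the underlying quadratic $\mu^{2}+5k(k-1)\mu-4k^{2}(k-1)$ lies strictly below $\frac{4k}{5}$ (in fact in $\big(\frac{4k}{5}-1,\frac{4k}{5}\big]$), so the equivalence genuinely relies on $k$ and $\mu$ being integers, with a separate check for small $k$. A secondary technical point is verifying that the complementary-design parameters $k',\mu'$ are positive integers satisfying $1\le\mu'<k'-1$, so that $\Gamma_3$ is indeed a connected bipartite distance-regular graph of diameter $3$ to which the criterion applies.
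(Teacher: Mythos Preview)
The paper does not supply its own proof of this proposition; it is quoted from \cite[Proposition 13]{BCEM2012} as an already-established result in the literature, so there is nothing to compare against.

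Your argument is correct. The specialisation of Proposition~\ref{Mpropertydistancebiregular} to the array $\{k,k-1,k-\mu;1,\mu,k\}$ checks out: the left-hand side simplifies to $\frac{(k-1)(4k-3\mu)}{\mu^{2}}$, the right-hand side to $\frac{\mu+2k(k-1)}{k\mu}$, and clearing denominators gives exactly $k(k-1)(4k-5\mu)\le\mu^{2}$. Your integrality argument reducing this to $5\mu\ge 4k$ is sound, including the separate check at $k=2$. For the dichotomy, the complementary-design parameters $k'=\frac{(k-1)(k-\mu)}{\mu}$ and $\mu'=\frac{(k-\mu)(k-\mu-1)}{\mu}$ are correct, and the identity $5\mu'-4k'=\frac{(k-\mu)\big((k-1)-5\mu\big)}{\mu}$ yields the stated trichotomy. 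The two obstacles you flag are real but routine: integrality of $k',\mu'$ and the condition $1\le\mu'<k'-1$ follow because $\Gamma_3$ is the incidence graph of the complementary $2$-design when $1\le\mu<k-1$, hence itself a bipartite distance-regular graph of diameter $3$ to which your criterion applies verbatim.
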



\medskip
\item[Case 5:] $D_0=3,D_1=4$ with $k_0>k_1$. In \cite[Proposition 5.3]{A1990} it is shown that $\Gamma$ is the point-line incidence graph of a quasi-symmetric design with $x=0$  if and only if $\Gamma$ is a distance-biregular graph with $D_0=3,D_1=4$ and intersection array 
\begin{align*}
\left\{ \begin{array}{lllll} 
    r; & 1, & \lambda, & k & \\
    k; & 1, & y , & \frac{k\lambda}{y}, & k \\
                \end{array} \right\}.
\end{align*}
Recall that a $2$-$(v,k,\lambda)$ \emph{quasi-symmetric design} is a design with two intersection numbers, and we are interested in those having $x=0<y<k.$ Moreover, $k\lambda$ needs to be a multiple of $y$, that is, $k \lambda=\alpha y$, $\alpha\in\mathbb{N}$. Also, recall that $r> \lambda$. Moreover, since $B_{0,D_0}=B_{1,D_1}$, it holds that $(y-1)(r-1)=(k-1)(\lambda-1)$, see also \cite{BS82} for a proof based on design techniques.

\end{description}

Next, using the condition in Proposition \ref{Mpropertydistancebiregular} with $r=k_0, k=k_1$, we obtain  a necessary and sufficient condition for a distance-biregular graph with $D_0=3,\,\,D_1=4$ to have the $M$-property.

\begin{proposition}\label{prop:conditionMproperty}
  A distance-biregular graph with diameters $D_0=3,D_1=4$ has the $M$-property if and only if
$$\displaystyle (k-1)(r-\lambda)\Big( (k+r)^2-\lambda k\Big) \leq  k^2\lambda^2.
$$

\end{proposition}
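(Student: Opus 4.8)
The plan is to apply the $M$-property criterion of Proposition~\ref{Mpropertydistancebiregular} directly, specialized to the intersection array of the point-line incidence graph of a quasi-symmetric design with $x=0$, as displayed in Case~5. With the identification $k_0=r$ and $k_1=k$, the relevant array is $\big\{r;1,\lambda,k\big\}$ for the partite class $V_0$. Since $D_0=3$, the sum in Proposition~\ref{Mpropertydistancebiregular} has only two terms, corresponding to $j=1$ and $j=2$, so the inequality to be established is
\begin{align*}
\frac{1}{k_{0,1}b_{0,1}}\Big(\sum_{i=2}^{3}k_{0,i}\Big)^2+\frac{1}{k_{0,2}b_{0,2}}\,k_{0,3}^2\ \le\ \frac{n-1}{r}.
\end{align*}

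First I would compute all the quantities entering this inequality in terms of $r,k,\lambda$. From the array: $k_{0,0}=1$, $k_{0,1}=r$, $b_{0,1}=k_1-1=k-1$; then $k_{0,2}=k_{0,1}b_{0,1}/c_{0,2}=r(k-1)/\lambda$ using $c_{0,2}=\lambda$; and $b_{0,2}=k_0-c_{0,2}=r-\lambda$ (since $i=2$ is even, $c_{0,2}+b_{0,2}=k_0=r$); finally $k_{0,3}=k_{0,2}b_{0,2}/c_{0,3}=r(k-1)(r-\lambda)/(\lambda k)$ using $c_{0,3}=k_1=k$. Then $n=B_{0,3}=1+r+\tfrac{r(k-1)}{\lambda}+\tfrac{r(k-1)(r-\lambda)}{\lambda k}$, so $n-1=r+\tfrac{r(k-1)}{\lambda}\big(1+\tfrac{r-\lambda}{k}\big)=r+\tfrac{r(k-1)(k+r-\lambda)}{\lambda k}$; also $\sum_{i=2}^{3}k_{0,i}=n-1-r=\tfrac{r(k-1)(k+r-\lambda)}{\lambda k}$.

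Next I would substitute these expressions and clear denominators. The first term becomes $\tfrac{1}{r(k-1)}\cdot\Big(\tfrac{r(k-1)(k+r-\lambda)}{\lambda k}\Big)^2=\tfrac{r(k-1)(k+r-\lambda)^2}{\lambda^2 k^2}$; the second term becomes $\tfrac{\lambda}{r(k-1)(r-\lambda)}\cdot\Big(\tfrac{r(k-1)(r-\lambda)}{\lambda k}\Big)^2=\tfrac{r(k-1)(r-\lambda)}{\lambda k^2}$. Multiplying the full inequality through by $\lambda^2 k^2/\big(r(k-1)\big)$ (all positive, since $r>\lambda$ gives $r-\lambda>0$), the left side collapses to $(k+r-\lambda)^2+\lambda k(r-\lambda)$ and the right side to $\tfrac{\lambda^2 k^2(n-1)}{r(k-1)}=\lambda^2 k^2\Big(\tfrac{\lambda k}{(k-1)(k+r-\lambda)}\cdot\tfrac{n-1}{\cdots}\Big)$ — here I would instead use $\tfrac{n-1}{r}=\tfrac{\lambda k+(k-1)(k+r-\lambda)}{\lambda k}$, so the right side is $\tfrac{\lambda^2 k^2}{r(k-1)}\cdot r\cdot\tfrac{\lambda k+(k-1)(k+r-\lambda)}{\lambda k}=\tfrac{\lambda k\big(\lambda k+(k-1)(k+r-\lambda)\big)}{k-1}$. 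After multiplying by $(k-1)$ and simplifying, one expects the $(k-1)(k+r-\lambda)$ cross-terms to cancel against the left side, leaving $(k-1)(r-\lambda)\big((k+r)^2-\lambda k\big)\le k^2\lambda^2$; to identify the left side correctly I would expand $(k+r-\lambda)^2=(k+r)^2-2\lambda(k+r)+\lambda^2$ and combine with $\lambda k(r-\lambda)$, then group the terms carrying the factor $(r-\lambda)$.

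The main obstacle I anticipate is purely the algebraic bookkeeping: making sure the cancellation of the $(k-1)$-weighted terms is exact and that the surviving expression factors as the clean product $(k-1)(r-\lambda)\big((k+r)^2-\lambda k\big)$. I would double-check this by testing a known feasible case (e.g.\ a small quasi-symmetric design, or the degenerate bipartite distance-regular specialization $r=k$ which forces $r-\lambda=0$ and should make both the claimed inequality and the Proposition~\ref{Mpropertydistancebiregular} criterion reduce consistently to the $D=3$ bipartite distance-regular case of Proposition~\ref{bipartite}). One should also verify that no spurious sign change occurs when clearing denominators, which is guaranteed since $k\ge2$, $\lambda\ge1$, and $r>\lambda$ from the design axioms. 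No additional structural input beyond Proposition~\ref{Mpropertydistancebiregular} and the parameter relations of Case~5 is needed; the relation $(y-1)(r-1)=(k-1)(\lambda-1)$ is not required for this particular inequality.
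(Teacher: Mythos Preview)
Your approach is exactly the paper's: apply Proposition~\ref{Mpropertydistancebiregular} for $\ell=0$ with $D_0=3$, plug in the Case~5 parameters $k_0=r$, $b_{0,1}=k-1$, $b_{0,2}=r-\lambda$, $k_{0,2}=r(k-1)/\lambda$, $k_{0,3}=r(k-1)(r-\lambda)/(\lambda k)$, and simplify; the paper in fact gives fewer details than you do, ending with ``After performing some simplifications\ldots''. Two small arithmetic slips to fix when you carry it out: after multiplying by $\lambda^2k^2/\big(r(k-1)\big)$ the second left-hand term is $\lambda(r-\lambda)$, not $\lambda k(r-\lambda)$, and on the right you should get $\lambda k\big(\lambda k+(k-1)(k+r-\lambda)\big)/\big(r(k-1)\big)$ (there is an $r$ in the denominator); with these corrections the bracket $(k+r-\lambda)^2+\lambda(r-\lambda)=(k+r)^2-2\lambda k-\lambda r$ combines with the cross-term to give precisely $(k-1)(r-\lambda)\big((k+r)^2-\lambda k\big)\le k^2\lambda^2$.
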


\begin{proof}
We use   Proposition \ref{Mpropertydistancebiregular} to obtain:

\begin{align*}
  & 
  \displaystyle \frac{1}{k_{0,1}b_{0,1}} \Big (k_{0,2}+k_{0,3}\Big)^2+\frac{1}{k_{0,2}b_{0,2}} \Big (k_{0,3}\Big)^2 \leq  \frac{(n-1)}{k_0}.
\end{align*}
Keeping in mind that 
$$\begin{array}{llll}
k_0=r, &  b_{0,1}=k-1, & b_{0,2}=r-\lambda, &\\[3ex]
k_{0,0}=1, & k_{0,1}=r, & k_{0,2}=r\dfrac{(k-1)}{\lambda}, &\,\, k_{0,3}=r\dfrac{(k-1)}{\lambda}\dfrac{(r-\lambda)}{k} ,
\end{array}
$$
we get that
$$
\dfrac{n-1}{r}=1+\dfrac{(k-1)}{\lambda k}\big(k+r-\lambda\big).$$
After performing some simplifications on the first inequality, the desired result follows.
\end{proof}

Finally, we study the $M$-property for some classes of distance-biregular graphs with $D_0=3$ and $D_1=4$. 

\begin{example} Consider the point-line incident graph of the affine plane  ${\cal A}(2,n)$ of order $n$, whose intersection array is
\begin{align*}
\left\{ \begin{array}{lllll} 
    n+1; & 1, & 1, & n & \\
    n; & 1, & 1 , & n, & n \\
                \end{array} \right\}.
\end{align*}
It is easy to check that it does not verify the inequality in Proposition \ref{Mpropertydistancebiregular} and hence it does not verify the $M$-property.
\end{example}

\begin{example} Let $\Gamma=S(K_{r+1})$ be the subdivision graph of the the complete graph $K_{r+1}$. This is a class of distance-biregular graphs with diameters $D_0=4$, $D_1=3$ and  parameters   

{\footnotesize{
$$\begin{array}{rl}
 D_0=3,\,\,   k_0=r, & c_{0,1}=c_{0,2}=1, c_{0,3}=2, b_{0,0}=r, b_{0,1}=1, b_{0,2}=r-1;  \\[2ex]
 D_1=4,\,\,  k_1=2, & c_{1,1}=c_{1,2}=1, c_{1,3}=c_{1,4}=2, b_{1,0}=2, b_{1,1}=r-1, b_{1,2}=1, b_{1,3}=r-2.
\end{array}$$
}}
which does not hold the condition from Proposition \ref{Mpropertydistancebiregular}, and thus does not have the $M$-property. In fact, we can find the group inverse. We denote ${\sf L}^\#_{\ell,j}={\sf L}^\#(x,y)$ when $y\in V_\ell$ and $d(x,y)=j$. Then,

{\footnotesize{
 $$\begin{array}{rlrlrl}
{\sf L}^\#_{0,0}=&\hspace{-.25cm}\dfrac{r(r + 3)(2r + 3)}{(r + 1)^2(r + 2)^2}, & {\sf L}^\#_{0,1}=&\hspace{-.25cm}\dfrac{(r + 3)(r^2 - 2)}{(r + 1)^2(r + 2)^2}, & {\sf L}^\#_{0,2}=&\hspace{-.25cm} -\dfrac{r^2 + 7r + 8}{(r + 1)^2(r + 2)^2}, \\[3ex]
{\sf L}^\#_{0,3}=&\hspace{-.25cm} -\dfrac{2(r^2 + 5r + 5)}{(r + 1)^2(r + 2)^2}, & & & &  \end{array}$$
}}
and 

{\footnotesize{
$$\begin{array}{rlrlrl}
{\sf L}^\#_{1,0}=&\hspace{-.25cm}\dfrac{(r + 3)(r^3 + 5r^2 + 2r - 4)}{2(r + 1)^2(r + 2)^2}, & {\sf L}^\#_{1,1}=&\hspace{-.25cm}\dfrac{(r + 3)(r^2 - 2)}{(r + 1)^2(r + 2)^2}, & {\sf L}^\#_{1,2}=& \hspace{-.25cm}\dfrac{r^3 - r^2 - 18r - 20}{2(r + 1)^2(r + 2)^2}, \\[3ex]
{\sf L}^\#_{1,3}=&\hspace{-.25cm} -\dfrac{2(r^2 + 5r + 5)}{(r + 1)^2(r + 2)^2}, & {\sf L}^\#_{1,4}=&\hspace{-.25cm} -\dfrac{(r + 3)(3r + 4)}{(r + 1)^2(r + 2)^2}. & &  \end{array}$$
}}
\end{example}

For the classification of existing quasi-symmetric 2-designs, see \cite[Table 48.25]{Shrikhande}. We should note that for the existing quasi-symmetric 2-designs with $x=0$, none passes the condition from Proposition  \ref{prop:conditionMproperty}.

The above discussion extends the results in \cite{BCEM2012} and completes the classification of distance-regularised graphs that have the $M$-property. There, it was shown that if there are distance-regular graphs with valency $k \geq 3$ and diameter $D \geq 2$ having the $M$-property, then they have at most $3k$ vertices and $D\leq 3$. Also in \cite{BCEM2012}, it was conjectured that there is no primitive distance-regular graph with diameter $3$ having the $M$-property. This conjecture was shown to be true except
possibly for finitely many primitive distance-regular graphs \cite[Theorem 1]{KP2013}.

In view of the above results, we conclude this paper with the following conjecture.

\begin{con}
There are not point-line incidence graphs of a $2$-quasi-symmetric design with $x=0$ that have the $M$-property.
\end{con}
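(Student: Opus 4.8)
The plan is to reduce the statement to the necessary condition already obtained in Corollary~\ref{penrose-distance-regular:necessary} and then read off a contradiction from the combinatorics of the design, with essentially no computation on the intersection array. First I would fix the dictionary via \cite[Proposition~5.3]{A1990}: the point-line incidence graph $\Gamma$ of a $2$-quasi-symmetric design with $x=0$ is exactly a distance-biregular graph with $D_0=3$ and $D_1=4$, with $k_0=r$ the replication number, $k_1=k$ the block size, $v$ points, $b=vr/k$ blocks, and $n=v+b$; since $x=0$ forces two distinct block intersection numbers the design is non-symmetric, so $r>k$ and we are genuinely in Case~5. Because $D_0=3\ge 2$, Corollary~\ref{penrose-distance-regular:necessary} applies, and it says that if $\Gamma$ had the $M$-property then $n<2k_1+k_0=2k+r$.

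Next I would observe that $D_1=4$ forces the design to contain two disjoint blocks: two blocks $B,B'$ lie in $V_1$ and hence are at even distance in $\Gamma$, and if they met in a point $p$ then $B\sim p\sim B'$ would give $d(B,B')\le 2$, so a pair realising distance $4$ (which exists since $D_1=4$) must be disjoint. Two disjoint $k$-subsets of the point set force $v\ge 2k$, whence
$$n=v+b=\frac{v(k+r)}{k}\ \ge\ 2(k+r)\ =\ 2k+2r\ >\ 2k+r,$$
contradicting $n<2k+r$. Hence $\Gamma$ has no $M$-property, which is the assertion of the conjecture. The same argument in fact yields slightly more: no distance-biregular graph with $D_0=3$ and $D_1=4$ has the $M$-property, since for these the existence of two vertices of $V_1$ at distance $4$ just means that two blocks of the associated design are disjoint.

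The step I expect to require the most care is the dictionary in the first paragraph: one must make sure that the incidence graph of an $x=0$ quasi-symmetric design is connected (automatic for a $2$-design with $v\ge 2$) and genuinely has $D_1=4$, i.e.\ that ``$x=0$'' is not vacuous and a disjoint pair of blocks really occurs, since that is exactly the input to the distance-$4$ step. If one prefers to argue directly from Proposition~\ref{prop:conditionMproperty}, the same input $v\ge 2k$ gives $r=\lambda(v-1)/(k-1)>2\lambda$, and one then has to verify that $(k-1)(r-\lambda)\big((k+r)^2-\lambda k\big)\le k^2\lambda^2$ is incompatible with $r>2\lambda$ together with $(r-1)(y-1)=(k-1)(\lambda-1)$ and $0<y<k$; this is correct but noticeably less transparent than the $n<2k+r$ route.
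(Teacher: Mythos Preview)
The paper offers no proof of this statement: it is posed as a conjecture, the only evidence being the empirical check that the quasi-symmetric $2$-designs with $x=0$ listed in \cite[Table~48.25]{Shrikhande} all fail the test of Proposition~\ref{prop:conditionMproperty}. Your argument is correct and in fact settles the conjecture. The hypothesis $x=0$ (equivalently, $D_1=4$) furnishes two disjoint blocks, whence $v\ge 2k$; combined with the flag count $bk=vr$ this gives
\[
n \;=\; v+b \;=\; \frac{v(k+r)}{k}\;\ge\; 2(k+r)\;>\;2k+r\;=\;2k_1+k_0,
\]
directly contradicting the necessary condition of Corollary~\ref{penrose-distance-regular:necessary}. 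The graph-only variant you sketch --- two vertices of $V_1$ at distance $4$ have disjoint neighbourhoods in $V_0$, so $|V_0|\ge 2k_1$, and edge-counting $|V_0|\,k_0=|V_1|\,k_1$ yields the same bound --- makes the proof self-contained within the paper's framework without invoking the design dictionary, and shows (equivalently, via \cite[Proposition~5.3]{A1990}) that no distance-biregular graph with $D_0=3$ and $D_1=4$ has the $M$-property. In short, you have proved what the authors left open; the route through Proposition~\ref{prop:conditionMproperty} that you mention as an alternative would work too, but the detour through Corollary~\ref{penrose-distance-regular:necessary} is, as you say, much cleaner.
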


\section*{Acknowledgments}
The research of A. Abiad is partially supported by the FWO grant 1285921N.


\begin{thebibliography}{10}

\bibitem{A1990}
J.M. van den Akker, 
\newblock Distance-biregular graphs,
\newblock 
MSc thesis Eindhoven University of Technology, 1990.


\bibitem{AB2019v1}
A. Azimi and R.B. Bapat, The Moore-Penrose inverse of the incidence matrix of complete multipartite and
bi-block graphs, \emph{Discrete Math.} 342 (2019), 2393-2401.

\bibitem{AB2019v2}
A. Azimi, R.B. Bapat and E. Estaji, Moore-Penrose inverse of incidence matrix of graphs with complete
and cycle blocks, \emph{Discrete Math.} 342 (2019), 10-17.

\bibitem{AB2018}
A. Azimi and R.B. Bapat, Moore-Penrose inverse of the incidence matrix of a distance regular graph, \emph{Linear
Algebra Appl.} 551 (2018) 92-103.


\bibitem{BS82}
A. Baartmans and M.S. Shrikhande,
\newblock  Designs with no three mutually disjoint blocks, 
\newblock \emph{Discrete Math.} 40 (1982), 129-139.




\bibitem{BCE2012} E. Bendito, A. Carmona, A.M. Encinas, M. Mitjana, The $M$-matrix inverse problem for singular and symmetric Jacobi matrices, \emph{Linear Algebra Appl.} 436 (2012), 1090-1098. 

\bibitem{CEM2013} A. Carmona, A.M. Encinas, M. Mitjana, On the $M$-matrix inverse problem for singular and symmetric Jacobi matrices, \emph{Electron. J. Linear Algebra} 24 (2013), 237-254. 


\bibitem{BCE00}
E.~Bendito, A.~Carmona and A.M.~Encinas, 
\newblock Solving boundary value problems  on
networks using equilibrium measures, 
\newblock \emph{J. Funct. Anal.} 171 (2000), 155-176.

\bibitem{BCEM03}
E.~Bendito, A.~Carmona and A.M.~Encinas,
\newblock  Solving Dirichlet and Poisson problems on graphs by means of equilibrium measures,
\newblock \emph{Eur. J. Comb.} 24 (2003), 365-375.

\bibitem{BCEM10-1}
E.~Bendito, A.~Carmona, A.M.~Encinas and  M.~Mitjana,
\newblock Generalized inverses of symmetric M-matrices,
\newblock \emph{Linear Algebra Appl.} 432 (2010), 2438-2454.

\bibitem{BCEM2000shortest}
E.~Bendito, A.~Carmona and A.M.~Encinas,
\newblock  Shortest Paths in Distance-regular Graphs,
\newblock \emph{European J. Combin.} 21 (2000), 153-166.
 
\bibitem{BCEM2012}
E. Bendito, A. Carmona, A.M. Encinas and M. Mitjana,
\newblock Distance-regular
graphs having the M-property, 
\newblock \emph{Linear Multilinear Algebra} 60 (2012), 225-240.

\bibitem{CEM2014}
A. Carmona, A.M. Encinas and M. Mitjana,
\newblock Discrete elliptic operators and their Green operators, \newblock \emph{Linear Algebra Appl.} 442 (2014), 115-134.





\bibitem{BCN89}
A.E.~Brouwer, A.M.~Cohen and A.~Neumaier, \emph{Distance-regular
graphs.}
\newblock {Ergebnisse der Mathematik und ihrer Grenzgebiete}, vol. 18,
Springer-Verlag, Berlin, 1989.




\bibitem{CKN95}
Y.~Chen, S.J.~Kirkland and M.~Neumann,
\newblock Group generalized inverses of $M$-matrices associated with periodic and nonperiodic Jacobi
matrices,
\newblock \emph{Linear Multilinear Algebra} 39 (1995), 325-340.

\bibitem{cz2021} F. Chung and J. Zeng, Forest formulas of discrete Green's functions, arXiv:2109.01324.

\bibitem{fy2000}
F. Chung and S.-T. Yau,
\newblock
Discrete Green's Functions,
\newblock 
\emph{J.  Combin. Theory Ser A} 91(1-2) (2000),
191-214.



\bibitem{c1999part1} B. Curtin, Bipartite Distance-regular Graphs, Part I, \emph{Graphs  Combin.} 15 (1999), 143-158.

\bibitem{c1999part2} B. Curtin, Bipartite Distance-regular Graphs, Part II, \emph{Graphs  Combin.} 15 (1999), 377-391.

\bibitem{d1983} 
C. Delorme, R\'egularit\'e m\'etrique forte, Rapport de Reserche No. 156, Univ. Paris Sud, Orsay (1983).

\bibitem{d1994} 
C. Delorme, Distance biregular bipartite graphs, \emph{European J. Combin.} 15 (1994), 223-238.


\bibitem{DN1984} E. Deutsch and M. Neumann. Derivatives of the Perron root at an essentially nonnegative matrix and the group inverse of an $M$-matrix, \emph{J. Math. Anal.  Appl.} 102 (1984), 1-29.


\bibitem{F2013}
M.A. Fiol, The spectral excess theorem for distance-biregular graphs, \emph{ Electron. J.  Combin.} 20(3) (2013), \texttt{\#}P21.

\bibitem{gs1987} 
C.D. Godsil and J. Shawe-Taylor, Distance-regularised graphs are distance-regular or
distance-biregular, \emph{J. Combin. Theory Ser. B} 43 (1987), 14-24.

\bibitem{HP2021} A. Howlader and P. Panigrahi, On the distance spectrum of minimal cages and associated distance biregular graphs, \emph{Linear Algebra Appl.} 636 (2022), 115-133.






\bibitem{KLS2021} A. Kalauch, S. Lavanya, K.C. Sivakumar, Matrices whose group inverses are M-matrices, \newblock \emph{Linear Algebra Appl.} 614 (2021), 44-67.


\bibitem{kirklandbook} S.J. Kirkland and M. Neumann, Group Inverses of $M$-Matrices and Their Applications, Chapman $\&$ Hall, 2013. 


\bibitem{KN95}
S.J.~Kirkland and M.~Neumann,
\newblock Group inverses of $M$-matrices associated with nonnegative matrices having few
eigenvalues,
\newblock \emph{Linear Algebra Appl.} 220 (1995), 181-213.

\bibitem{KN98}
S.J.~Kirkland and M.~Neumann,
\newblock The M-Matrix Group Generalized Inverse Problem for Weighted Trees, 
\newblock \emph{SIAM J.  Matrix Anal.  Appl.}, 19 (1998), 226-234. 

\bibitem{KP2013}
J.H. Koolen and J. Park,
\newblock A note on distance-regular graphs with a small number of vertices compared to the valency,
\newblock \emph{European J. Combin.} 34(6) (2013), 935-940.

\bibitem{MS1985}
B. Mohar and J. Shawe-Taylor, 
\newblock
Distance-biregular graphs with $2$-valent vertices and distance-regular line graphs,
\newblock 
\emph{J.  Combin. Theory Ser. B} 38(3) (1985), 193-203.


\bibitem{NPW1982}
M. Neumann, G.D. Poole, H.J. Werner, \newblock
More on generalizations of matrix monotonicity,
\newblock 
\emph{Linear Algebra Appl.} 48 (1982), 413-435.

\bibitem{Shrikhande}
M.S. Shrikhande, Quasi-symmetric designs, in: The Handbook of Combinatorial Designs, Second Edition (eds. C.J. Colbourn and
J.H. Dinitz), CRC Press, 2007, pp. 578-582.



\bibitem{U1997}
H. Urakawa, 
\newblock Heat kernel and Green kernel comparison theorems for infinite graphs,
\newblock
\emph{J. Funct. Anal.} 146 (1997), 206-235.

\end{thebibliography}
\end{document}